\newtheorem{remark}{Remarque}[section]
\newtheorem{definition}{Définition}[section]
\newtheorem{proposition}{Proposition}[section]
\newtheorem{lemma}{Lemme}[section]
\newtheorem{theorem}{Théorème}[section]
\date{}
\begin{document}
\title{Quelques propriétés qualitatives pour le problème de surfaces de quadrature dans le plan}
\author{Mohammed Barkatou}
\date{}
\maketitle
\begin{abstract}
Dans ce papier, nous commençons par donner une condition nécessaire et suffisante d'existence de solutions pour le problème de surfaces de quadrature
dans le cas où le terme source est une densité supportée par un segment. Ensuite, en utilisant la symétrisation de Steiner continue,
nous montrons que la solution obtenue est symétrique par rapport à l'axe des abscisses et que son bord est analytique.
\end{abstract}

\maketitle {\small {\noindent {\bf Keywords:} Dérivation par rapport au domaine, Principe du maximum, Propriété géométrique de la normale, Symétrisation de Steiner.

\noindent {\bf 2010 Mathematics Subject Classification:} 35A15, 35J65, 35B50

\section{Introduction}
On se donne une constante strictement positive $k$ et une fonction $f\in L^2(\mathbb{R}^N)$, positive et à support compact $K$.
Notons par $C$ l'enveloppe convexe de $K$.
Considérons le problème de surfaces de quadrature $QS(f,k)$ suivant :  Trouver un ouvert borné $\Omega$ qui contient strictement $C$
et tel que le problème surdéterminé suivant ait une solution.
\begin{equation*}
 \left\{
\begin{array}{c}
-\Delta u=f \quad \text{dans  }\Omega\\
u=0\text{ sur  }\partial \Omega\\
 \left|
\nabla u\right|=k\; \text{  sur }\partial
 \Omega.
\end{array}
\right.
\end{equation*}
Le problème $QS(f,k)$ trouve son application dans plusieurs domaines de la physique
(free streamlines, jets, Hele-show, electromagnetic shaping, gravitational problems...etc).
Il a été étudié, de différentes façons, par plusieurs auteurs (pour plus de détails concernant les méthodes utiliées voir l'introduction de l'article de Gustafsson et Shahgholian \cite{gs}).

Dans \cite{sh} (voir aussi la Remarque 4.1 ci-dessous), en utilisant essentiellement la méthode des hyperplans mobiles \cite{gwn},
H. Shahgholian montre que si le problème à frontière libre $QS(f,k$) admet une solution $\Omega$ (de classe $C^{2}$), alors

\begin{enumerate}
  \item[(S1)] $\Omega$ contient strictement le convexe compact $C$.
  \item[(S2)] $\partial\Omega\setminus C$ est lipschitzien.
  \item[(S3)] En tout point de $\partial\Omega\setminus C$ où la normale intérieure existe, celle-ci rencontre $C$.
\end{enumerate}
La classe des ouverts qui vérifient ces trois propriétés n'est pas fermée pour la topologie de Hausdorff car l'ensemble des ouverts qui contiennent le compact $C$ ne l'est pas). Par ailleurs, les éléments de cette classe ne sont, en général, pas stables au sens de Keldysh-Wiener \cite{ke}, \cite{wi}.
En effet, si on considère, en deux dimensions, le segment $C=[0,1]\times\{0\}$, alors l'ouvert $\{(x,y\in\mathbb{R}^2,\;x^{2}+y^{2}<1\}$
vérifie les trois propriétés ci-dessus mais il n'est pas stable au sens de Keldysh-Wiener.\\
Dans \cite{ba1}, l'auteur introduit une classe d'ouverts vérifiant :
\begin{enumerate}
  \item[(N1)] $\Omega$ contient l'intérieur de $C$.
  \item[(N2)] $\partial\Omega\setminus C$ est lipschitzien.
  \item[(N3)] En tout point de $\partial\Omega\setminus C$ où la normale intérieure existe, celle-ci rencontre $C$.
  \item[(N4)] Pour toute normale (sélectionnée), $\Delta$, à $C$, $\delta\cap\Omega$ est connexe.
\end{enumerate}
Ces quatre conditions formant ce que l'auteur appelle la Propriété Géométrique de la Normale par rapport à $C$ (plus simplement $C$-GNP). Notons $\mathcal{O}_{C}$
la classe des ouverts bornés de $\mathbb{R}^{N}$ possédant la $C$-GNP.\quad

En utilisant la dérivation par rapport au domaine \cite{sz}, le problème $QS(f,k)$ apparaît comme l'équation d'Euler du problème de minimisation,
sur la classe des ouverts admissibles $\mathcal{O}_{C}$, de la fonctionnelle :

$$J(\Omega)=\int_{\Omega}(|\nabla u|^{2}-2fu+k^{2})dx$$
où $u$ est la solution du problème de Dirichlet sur $\Omega$.\\
Une fois que l'existence d'un minimum $\Omega$ est établie, Pour que celui-ci, soit une solution du problème $QS(f,k)$, il doit contenir strictement $C$
et la contidtion surdéterminée $|\nabla u|=k$ doit être obtenue au moins presque partout sur $\partial\Omega$. Cela ne peut se produire que si nous mettons
des conditions sur les données du problème $f$, $C$ et $k$.\\
Le but de cet article est de donner une condition nécessaire et suffisante dans le cas où le terme source $f$ est une densité supportée par un segement.
En clair, nous allons nous placer dans $\mathbb{R}^2$
et nous prendrons $f=a\delta_{C}$ où $C=[-1,1]\times\{0\}$ ($a$ est un réel strictement positif).\quad

Nous commençerons par donner une condition nécessaire d'existence de solution pour le problème $QS(a\delta_{C},k)$.
Ensuite, en appliquant le principe du maximum aux deux problèmes d'optimisation de formes considérés, nous allons démontrer que la condition nécessaire obtenue
est en fait suffisante pour que le minimum $\Omega$ obtenu contienne strictement le convexe $C$ (cf Théorème 3.1).
Enfin, la dérivation par rapport au domaine nous permettra de récupérer la condition surdéterminée sur $\partial\Omega$ et
la symétrisation de Steiner continue nous donnera le caractère analytique du bord de $\Omega$ ainsi que la symétrie de celui-ci par rapport à l'axe $Ox$
(cf Théorème 3.2).

\section{Résultats préliminaires}
Dans cette section, nous allons énnoncer puis démontrer quelques résultats qui nous permettront de prouver les Théorèmes 3.1 et 3.2.\\
La Proposition 2.3 \cite{ba1} nous donne une caractérisation de la $C$-GNP où on n'a pas besoin d'utiliser la normale. Cette propriété est notée $C$-SP et elle traduit le fait  $$\forall x\in\partial\Omega\setminus C,\quad K_{x}\cap\Omega\neq\emptyset$$  où
$$K_{x}=\{y\in \mathbb{R}^N\;:\;(y-x).(c-x)\leq 0,\;\forall c\in C\}.$$

\begin{proposition}
Soit $\Omega$ un ouvert qui possède la $C$-GNP. Alors hormis les parties de $\partial\Omega\cap C$ (où seulement les champs de vecteurs $V$ tels que $V.\nu\geq 0$ sont autorisés), la $C$-GNP est conservée pour $\partial\Omega\setminus C$, quelque soit le champ de vecteurs $V$ de $\mathbb{R}^N$ dans $\mathbb{R}^N$,
borné et lipschitien, et tout $t\in ]0,1[$.
\end{proposition}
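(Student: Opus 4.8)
The plan is to pass to the separation form of the property and then track boundary points through the deformation. By Proposition~2.3 of \cite{ba1}, an open bounded set $\Omega'$ has the $C$-GNP if and only if it has the $C$-SP, that is, $K_{x}\cap\Omega'\neq\emptyset$ for every $x\in\partial\Omega'\setminus C$. Recall that $K_{x}$ is a closed convex cone with apex $x$; since $C$ is convex and compact and $x\notin C$, the cone generated by $C-x$ is pointed, hence $K_{x}$ has nonempty interior, $\mathrm{int}\,K_{x}$ is dense in $K_{x}$, and the open half-line issued from $x$ in the direction of its nearest point $P_{C}(x)$ of $C$ lies in $\mathrm{int}\,K_{x}$. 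Let $T_{t}$ be the diffeomorphism of $\mathbb{R}^{N}$ attached to $V$ (for instance $T_{t}=I+tV$, a diffeomorphism for $t\in]0,1[$ after normalizing $V$, or the flow of $V$), and set $\Omega_{t}=T_{t}(\Omega)$, so that $\partial\Omega_{t}=T_{t}(\partial\Omega)$ and all the $\Omega_{t}$ stay in one compact set because $V$ is bounded. It then suffices to check the $C$-SP for $\Omega_{t}$: given $\xi\in\partial\Omega_{t}\setminus C$, write $\xi=T_{t}(x)$ with $x\in\partial\Omega$ and exhibit a point of $\Omega_{t}$ in $K_{\xi}$.

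First I would treat the generic case $x\in\partial\Omega\setminus C$. The $C$-SP of $\Omega$ furnishes a witness $y\in K_{x}\cap\Omega$, and since $\Omega$ is open and $\mathrm{int}\,K_{x}$ is dense in $K_{x}$ we may assume $y\in\mathrm{int}\,K_{x}\cap\Omega$, so that $\delta(x):=\min_{c\in C}(y-x)\cdot(c-x)>0$. The naive candidate $T_{t}(y)$ lies in $\Omega_{t}$ but need not lie in $K_{\xi}$, because $T_{t}$ distorts the angular condition, so I would propagate the property in the parameter $t$. Put $A=\{t\in[0,1[:\Omega_{t}\text{ has the }C\text{-SP}\}$; then $0\in A$. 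For openness of $A$, at $t_{0}\in A$ one writes $\Omega_{t_{0}+s}=S_{s}(\Omega_{t_{0}})$ with $S_{s}$ a deformation of small $C^{1}$-norm and uses the estimate $(S_{s}(y)-S_{s}(x))\cdot(c-S_{s}(x))=(y-x)\cdot(c-x)+O(\|S_{s}-I\|_{C^{1}})$ together with a lower bound on the margins $\delta$ that is uniform over the compact boundary $\partial\Omega_{t_{0}}$; closedness of $A$ in $[0,1[$ is obtained similarly, a witness at a limit parameter serving nearby parameters. Hence $A=[0,1[$, which settles every $\xi$ coming from $\partial\Omega\setminus C$. If one only needs the $C$-GNP for small $t$, as in the shape-derivative computation, the perturbative estimate alone suffices and this continuation step can be skipped.

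It remains to deal with $\xi=T_{t}(x)$ where $x\in\partial\Omega\cap C$, so that $x\in C$ but $\xi\notin C$; this is exactly where the hypothesis $V\cdot\nu\ge 0$ (with $\nu$ the outward unit normal) is used. It guarantees that along the deformation such a boundary point is pushed to the side of $C$ opposite to $\Omega$, so that the portion of $\partial\Omega_{t}$ near $\xi$ still faces away from $C$: moving from $\xi$ in the interior direction $P_{C}(\xi)-\xi\in\mathrm{int}\,K_{\xi}$ one reaches, after a short distance, the image under $T_{t}$ of a neighbourhood in $\Omega$ of the original point $x$, which the hypothesis keeps on the $\Omega_{t}$-side, so that $K_{\xi}\cap\Omega_{t}\neq\emptyset$. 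Were $V\cdot\nu<0$ somewhere on $\partial\Omega\cap C$, the boundary could instead cross $C$, producing a $\xi\in\partial\Omega_{t}\setminus C$ with $C$ strictly between $\xi$ and $\Omega_{t}$ and hence $K_{\xi}\cap\Omega_{t}=\emptyset$, which is why the restriction on $V$ cannot be dropped. Collecting the two cases, $\Omega_{t}$ has the $C$-SP, hence the $C$-GNP by Proposition~2.3 of \cite{ba1}.

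I expect the main obstacle to be the uniformity in the continuation step: the witnesses $y=y(\xi)$ for $\Omega_{t_{0}}$ must be chosen so that the margins $\delta$ do not degenerate as $\xi$ runs over the compact boundary, and the distortion $\|DT_{t}-I\|$ must be controlled uniformly for $t\in]0,1[$ from the sole facts that $V$ is bounded and Lipschitz. A second delicate point is the rigorous handling of $\partial\Omega\cap C$: there $\partial\Omega_{t}$ need not be smooth, so the normal $\nu$ has to be interpreted on the regular part of $C\cap\partial\Omega$, the remaining lower-dimensional exceptional set being treated by approximation.
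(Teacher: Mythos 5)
Your proposal is aimed at the wrong orientation of the $C$-SP, and this is fatal to its architecture. The displayed formula in Section~2 has the relation reversed (a typo): the property equivalent to the $C$-GNP is $K_{x}\cap\Omega=\emptyset$ for every $x\in\partial\Omega\setminus C$ --- the cone $K_{x}$ consists of the directions pointing \emph{away} from $C$, and ``the inward normal meets $C$'' translates into ``no point of $\Omega$ lies in that cone''. This is how the property is used in Remark~2.1 (the vertical segment above $x$ lies in $K_{x}$, hence contains no point of $\Omega$, which is what yields convexity in the $Oy$ direction), and it is what the paper's own proof establishes: one assumes, for contradiction, the existence of $y_{t}\in K_{x_{t}}\cap\Omega_{t}$, pulls it back by $y=y_{t}-tV(y)$, $x=x_{t}-tV(x)$, and computes $(y-x)\cdot(c-x)=(y_{t}-x_{t})\cdot(c-x_{t})+\epsilon(t)\leq\epsilon(t)$ with $\epsilon(t)=O(t)$, contradicting the emptiness of $K_{x}\cap\Omega$. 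A sanity check: for a ball centred at the midpoint of the segment $C$, the GNP holds, yet at every boundary point $x$ one has $K_{x}\cap\Omega=\emptyset$; so a proof that \emph{produces} points of $\Omega_{t}$ inside $K_{\xi}$, as yours does, cannot be a proof that the GNP is preserved --- if anything it would show that (N3) fails. Your cone geometry is also reversed in itself: since $(c-x)\cdot(P_{C}(x)-x)\geq|x-P_{C}(x)|^{2}>0$, the ray from $x$ towards $P_{C}(x)$ is \emph{not} in $K_{x}$; it is the opposite ray $x+s(x-P_{C}(x))$, $s>0$, that lies in $\mathrm{int}\,K_{x}$, and this error propagates into your paragraph on the points $\xi=T_{t}(x)$ with $x\in\partial\Omega\cap C$.

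Beyond the orientation issue, the two steps where your argument would have to do real work are only sketched: the uniform lower bound on the margins in the open/closed continuation in $t$ (which you flag yourself) and the whole case of boundary points issued from $\partial\Omega\cap C$, which is heuristic. Note also that in the paper the hypothesis $V\cdot\nu\geq 0$ on $\partial\Omega\cap C$ serves to preserve (N1) (that $\Omega_{t}$ still contains the interior of $C$), while (N2) and (N4) are observed to be stable under such perturbations; your proposal misattributes the role of $V\cdot\nu\geq0$ and never addresses (N1), (N2), (N4). The correct route is the paper's: take $x_{t}\in\partial\Omega_{t}\setminus C$, suppose $K_{x_{t}}\cap\Omega_{t}\neq\emptyset$, transport the hypothetical point back to $\Omega$, and use that $V$ bounded and Lipschitz perturbs the defining inner products only by $O(t)$ to contradict $K_{x}\cap\Omega=\emptyset$; the quantitative margin/compactness refinement you were trying to set up is indeed what is needed to make that contradiction exact for fixed small $t$, but it must be applied to the emptiness statement, not to the existence of witnesses.
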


\begin{proof}
Grâce à l'équivalence entre la $C$-GNP et la $C$-SP, nous allons démontrer que si $\Omega$ possède la $C$-GNP alors $\Omega_{t}$ la possède aussi. La condition (N1) est obtenue pour les champs de vecteurs $V$ tels que $V.\nu\geq 0$ alors que les conditions (N2) et (N4) sont conservées par perturbation du domaine $\Omega$ pour tout champ de vecteurs $V$. Il nous reste donc à démontrer que la condition (N3) est aussi vérifiée.\quad

En effet, supposons, par l'absurde, qu'il existe $x_{t}\in\partial\Omega_{t}\setminus C$ tel que $K_{x_{t}}\cap\Omega_{t}\neq\emptyset$. Soit $y_{t}\in K_{x_{t}}\cap\Omega_{t}$, il existe alors $y\in\Omega$, $y=y_{t}-tV(y)$ tel que :
$$\forall c\in C,\;\;(y_{t}-x_{t}).(c-x_{t})\leq 0.$$
Montrons que $y\in K_{x}$. En effet,
\begin{eqnarray*}
    (y-x).(c-x)&=&(y_{t}-tV(y)-x_{t}+tV(x)).(c-x_{t}+tV(x))\\
    &=&(y_{t}-x_{t}+t(V(y)-V(x))).(c-x_{t}+tV(x))\\
    &=&(y_{t}-x_{t}).(c-x_{t})+\epsilon(t)
\end{eqnarray*}

où $\epsilon(t)=t(y_{t}-x_{t}).V(x)+t(V(y)-V(x)).(c-x_{t})+t^2(V(y)-V(x)).V(x)$ qui, comme $t$, tend vers $0$. Obtenant ainsi la contradiction.

\end{proof}

\begin{lemma}
Soit $\Omega$ un ouvert de $\mathbb{R}^2$ qui contient le segment $C=[-1,1]\times\{0\}$. Si $\Omega$ a la $C$-GNP alors $\Omega$ est convexe dans la direction $Oy$.
\end{lemma}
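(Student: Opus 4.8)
\emph{Plan de d\'emonstration.} On se propose de montrer que pour tout $x_0\in\mathbb{R}$ la tranche verticale $\Omega\cap(\{x_0\}\times\mathbb{R})$ est connexe, donc un intervalle : c'est la d\'efinition de la convexit\'e dans la direction $Oy$. On distingue trois r\'egimes selon la position de $x_0$ par rapport \`a $C=[-1,1]\times\{0\}$.

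Si $|x_0|<1$, le point $(x_0,0)$ appartient \`a l'int\'erieur relatif de $C$ et la droite verticale $\{x=x_0\}$ en est exactement la normale ; la condition (N4) de la $C$-GNP affirme alors pr\'ecis\'ement que $\Omega\cap(\{x_0\}\times\mathbb{R})$ est connexe. Si $x_0=\pm 1$, on v\'erifie de m\^eme que la direction verticale appartient au c\^one normal \`a $C$ au sommet $(\pm 1,0)$ ; une \'eventuelle non-connexit\'e de la tranche donnerait un trou qui, ou bien subsiste pour les droites verticales voisines situ\'ees dans la bande $|x|<1$ et contredit alors (N4), ou bien impose au bord un segment vertical de normale horizontale, contredisant (N3), ou bien se r\'eduit au point $(\pm 1,0)$, ce qu'exclut la r\'egularit\'e lipschitzienne (N2).

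Le cas substantiel est $|x_0|>1$ ; par sym\'etrie on suppose $x_0>1$ et l'on raisonne par l'absurde en supposant $\Omega\cap(\{x_0\}\times\mathbb{R})$ non connexe. On choisit deux composantes cons\'ecutives : il existe des hauteurs $b\le a$ telles que $q_1:=(x_0,b)$ soit le sommet de la composante inf\'erieure, $q_2:=(x_0,a)$ le bas de la composante sup\'erieure, et $(x_0,y)\notin\Omega$ pour tout $y\in[b,a]$ ; comme $x_0>1$, on a $q_1,q_2\in\partial\Omega\setminus C$. Sur la droite $\{x=x_0\}$, les points situ\'es juste en dessous de $q_1$ sont dans $\Omega$ et ceux situ\'es juste au-dessus n'y sont pas : si la normale int\'erieure $\nu(q_1)$ existe, sa seconde composante est strictement n\'egative, de sorte que la demi-droite normale issue de $q_1=(x_0,b)$ a une ordonn\'ee strictement d\'ecroissante et ne peut couper $C\subset\{y=0\}$ que si $b>0$. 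Sym\'etriquement, $\Omega$ est localement situ\'e au-dessus de $q_2$, donc $\nu(q_2)$, si elle existe, a une seconde composante strictement positive et sa demi-droite normale ne peut couper $C$ que si $a<0$. Or $b\le a$ interdit d'avoir simultan\'ement $b>0$ et $a<0$ : l'une au moins des in\'egalit\'es $b\le 0$, $a\ge 0$ est vraie, et si par exemple $b\le 0$ la normale int\'erieure en $q_1$ ne peut exister sans contredire (N3).

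Le point d\'elicat, et c'est l\`a que r\'eside le travail, est de se d\'ebarrasser de la clause d'existence des normales en $q_1$ et $q_2$. On invoque (N2) : l'ensemble $N$ des points de $\partial\Omega\setminus C$ o\`u la normale int\'erieure fait d\'efaut est de mesure de Hausdorff $\mathcal{H}^1$ nulle (Rademacher). Par ailleurs, si le trou est robuste, c'est-\`a-dire contient un point hors de $\overline{\Omega}$, la non-connexit\'e de la tranche persiste pour $x_1$ parcourant un intervalle $I\ni x_0$, et, pour chaque $x_1\in I$, le raisonnement pr\'ec\'edent montre que l'un des deux points $q_1(x_1)$, $q_2(x_1)$ appartient \`a l'ensemble $B$ des points de $\partial\Omega\setminus C$ qui sont soit tourn\'es vers le bas \`a ordonn\'ee $\le 0$, soit tourn\'es vers le haut \`a ordonn\'ee $\ge 0$ ; la projection de $B$ sur l'axe $Ox$ recouvre donc $I$, d'o\`u $\mathcal{H}^1(B)\ge |I|>0$ puisque cette projection est $1$-lipschitzienne, alors que $B\subset N$ entra\^{\i}ne $\mathcal{H}^1(B)=0$ : contradiction. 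Les trous non robustes sont d\'eg\'en\'er\'es et se r\`eglent \`a la main : un trou r\'eduit \`a un segment non trivial contenu dans $\partial\Omega$ est un segment vertical de $\partial\Omega\setminus C$ le long duquel la normale int\'erieure, horizontale et d\'efinie $\mathcal{H}^1$-presque partout, ne peut couper $C$ en ses points d'ordonn\'ee non nulle, ce qui contredit (N3) ; et un trou r\'eduit \`a un point se ram\`ene, en d\'epla\c{c}ant l\'eg\`erement $x_1$, \`a l'un des cas pr\'ec\'edents, ou fournit un point isol\'e de $\partial\Omega\setminus C$, exclu par (N2). Le c\oe{}ur g\'eom\'etrique est ainsi tr\`es court --- un point du bord dont la normale pointe vers le bas n'atteint $C$ que s'il est situ\'e au-dessus de $C$, et sym\'etriquement --- et l'essentiel de la preuve consiste \`a rendre cet argument insensible aux points de non-d\'erivabilit\'e du bord par l'argument de mesure ci-dessus ; c'est l\`a, je crois, que se situe le principal obstacle.
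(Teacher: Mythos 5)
Votre d\'emarche est correcte dans ses grandes lignes mais elle est tr\`es diff\'erente, et nettement plus lourde, que celle du papier. Ici le lemme est obtenu en une ligne \`a partir de (N4) (les normales s\'electionn\'ees \`a $C$ sont exactement les droites verticales), et la Remarque 2.1 montre que (N2) et (N3) suffisent via la caract\'erisation $C$-SP (Proposition 2.3 de [ba1]) : pour un point $x\in\partial\Omega$ d'ordonn\'ee $y_0>0$, la demi-droite verticale issue de $x$ et dirig\'ee vers le haut est contenue dans le c\^one $K_x$, donc ne rencontre pas $\Omega$ (et sym\'etriquement en dessous de l'axe $Ox$) ; la convexit\'e verticale en d\'ecoule imm\'ediatement, en \emph{tout} point du bord, sans jamais avoir besoin de l'existence d'une normale. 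C'est pr\'ecis\'ement la difficult\'e (points de non-d\'erivabilit\'e du bord lipschitzien) que vous contournez par Rademacher et l'argument de projection/mesure : cette machinerie devient inutile d\`es qu'on travaille avec les c\^ones $K_x$ plut\^ot qu'avec les normales.

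Sur le fond de votre argument, un point pr\'ecis est inexact : l'affirmation que, si la normale int\'erieure existe en $q_1=(x_0,b)$ (sommet de la composante inf\'erieure), sa seconde composante est \emph{strictement} n\'egative. Pour un bord seulement lipschitzien elle peut \^etre nulle (prendre localement $\Omega=\{x<x_0-(y-b)^3\}$), et si de plus $b=0$ la demi-droite normale, horizontale et dirig\'ee vers $C$, rencontre bel et bien $C$ : (N3) n'est alors pas viol\'ee, et l'inclusion $B\subset N$ sur laquelle repose votre argument de mesure est fausse telle quelle pour les points d'ordonn\'ee nulle \`a normale horizontale. La r\'eparation est facile : pour un trou robuste on a $b<a$, donc l'un au moins des deux points $q_1$, $q_2$ a une ordonn\'ee \emph{strictement} du mauvais c\^ot\'e ($b<0$ ou $a>0$), et en ce point m\^eme une normale horizontale manque $C$, donc ce point appartient \`a $N$ ; on peut aussi remarquer que les points de $\partial\Omega$ situ\'es sur $\{y=0\}$ \`a normale horizontale forment un ensemble $\mathcal{H}^1$-n\'egligeable. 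De m\^eme, dans le cas $x_0=\pm1$, un trou r\'eduit \`a un point n'est pas n\'ecessairement en $(\pm1,0)$ et (N2) ne l'exclut que s'il est isol\'e dans $\partial\Omega$ ; mais l'argument de d\'eplacement de $x_1$ que vous utilisez pour $|x_0|>1$ couvre ce cas. Moyennant ces retouches votre preuve tient debout ; elle reste toutefois beaucoup plus longue que l'argument du papier fond\'e sur la $C$-SP.
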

La preuve de ce lemme est une conséquence immédiate de la définition de la $C$-GNP, puisque les normales sélectionnées sont les droites orthogonales au segment $C$.
\begin{remark}
Comme $C$ est contenu dans $\Omega$, les conditions 2. et 3. de la $C$-GNP suffisent pout montrer ce lemme. En effet, soit $H^{+}$ et $H^{-}$ les demi-espaces supérieur et inférieur séparés par l'axe $Ox$. Soit $x$ un point de $\partial\Omega$. Comme celui-ci vérifie la $C$-GNP, il vérifie la $C$-SP et par conséquent le segment vertical au-dessus de $x$ est inclus dans le cone $K_{x}$. De même, s'il existait un point $z$ sur le segment vertical au-dessous de $x$, on aurait $x\in K_{z}$ contredisant ainsi le Lemme 2.4 \cite{ba1}. Donc $\Omega$ est convexe suivant la direction $Oy$ et $\partial\Omega\cap H^{+}$ et $\partial\Omega\cap H^{-}$ sont des graphes.
\end{remark}
\begin{definition}
Un arc de cercle $\gamma$ est dit de Type I si
\begin{enumerate}
  \item $\gamma$ est centré en $(-1,0)$ et est contenu dans $\{(x,y)\in\mathbb{R}^2,\; x\leq -1\}$, ou
  \item $\gamma$ est centré en $(1,0)$ et est contenu dans $\{(x,y)\in\mathbb{R}^2,\; x\geq 1\}$
\end{enumerate}
$\gamma$ est dit de Type II, sinon.
\end{definition}

\begin{definition}(Symétrisation de Steiner)
Soit $\Omega$ un ouvert de $\mathbb{R}^2$, convexe dans la direction $Oy$. Soient $\alpha$ un réel fixé et $\Omega_{\alpha}$ le segment $\{(\alpha,y),\;(\alpha,y)\in\Omega$. La symétrisation de Steiner consiste à recentrer chaque segment $\Omega_{\alpha}$ sur l'axe $Ox$ et le symétrisé $\Omega^{*}$ de $\Omega$ est défini par
$$\Omega^{*}=\{(\alpha,t),\;\;\alpha\in\mathbb{R},\;|t|<\frac{|\Omega_{\alpha}|}{2}\}.$$
\end{definition}
La symétrisation de Steiner ne conserve pas la $C$-GNP (voir la Remarque 4.2 ci-dessous). Pour y remédier, nous allons utiliser la symétrisation de Steiner continue (Proposition 2.6 \cite{ba1}).
\begin{definition}(Symétrisation de Steiner continue)
Soit $\Omega$ un ouvert de $\mathbb{R}^2$, convexe dans la direction $Oy$. La symétrisation de Steiner continue consiste à recentrer chaque segment
$[y_{1},y_{2}]$ parallèle à l'axe $Oy$ ($y_{1}$ et $y_{2}$ appartenant à $\partial\Omega$) avec une vitesse égale à la distance du centre de $[y_{1},y_{2}]$
à la droite $x=0$. C'est à dire que si $\partial\Omega$ est donné par deux fonctions $\phi_{1}$ et $\phi_{2}$ alors pour $t\in [0,1]$ son symétrisé $\Omega^{t}$ sera donné par les fonctions $\phi^{t}_{1}$ et $\phi^{t}_{2}$ définies par :
$$\phi^{t}_{1}=\phi_{1}-\frac{t}{2}(\phi_{1}-\phi_{2}),\;\; \phi^{t}_{2}=\phi_{2}+\frac{t}{2}(\phi_{1}-\phi_{2})$$
\end{definition}
\begin{proposition}
Soit $\Omega$ un ouvert de $\mathbb{R}^2$ qui vérifie la $C$-PGN, contient le segment $C$ et tel que $\partial\Omega$ ne contient pas d'arc de Type II.
Alors, pour $t$ assez petit, $\Omega^{t}$ admet la $C$-PGN.
\end{proposition}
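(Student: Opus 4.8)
The plan is to realise the continuous Steiner symmetrization as a deformation of $\Omega$ by a vector field and then to invoke Proposition 2.1. By the Lemma and the ensuing Remark, $\Omega$ is convex in the direction $Oy$ and $\partial\Omega$ is the union of two graphs $y=\phi_1(\alpha)\geq 0$ and $y=\phi_2(\alpha)\leq 0$ over a closed interval $[-b',b]$, the two functions vanishing simultaneously only at the extreme abscissae $\alpha=b$ and $\alpha=-b'$. Set $m=\tfrac12(\phi_1+\phi_2)$. The formulas defining $\phi_1^t$ and $\phi_2^t$ say exactly that $\Omega^t=\Phi_t(\Omega)$ with $\Phi_t(\alpha,y)=(\alpha,(1-t)y+t\,m(\alpha))$, i.e. $\Omega^t=(\mathrm{Id}+tV)(\Omega)$ for the recentring field given on $\overline\Omega$ by $V(\alpha,y)=(0,\,m(\alpha)-y)$, which I extend to a bounded Lipschitz field on $\mathbb{R}^2$ by a standard truncation. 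If $t$ is small enough that $t\,\mathrm{Lip}(V)<1$, then $\mathrm{Id}+tV$ is a global diffeomorphism, and Proposition 2.1 gives the $C$-GNP of $\Omega^t$ once its two hypotheses are verified: that $V$ is bounded and Lipschitz on $\mathbb{R}^2$, and that $V\cdot\nu\geq 0$ at the points of $\partial\Omega\cap C$ at which the inward normal $\nu$ exists.

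The second hypothesis is immediate. Since $\Omega$ contains the interior of $C$, one has $\partial\Omega\cap C\subseteq\{(-1,0),(1,0)\}$; being convex in $Oy$, $\Omega$ has such a point as an endpoint of its (possibly degenerate) vertical fibre, and the remaining part of that fibre lies on the side of $Ox$ towards which $\nu$ points. As $m(\pm1)$ is, up to the factor $1/2$, the signed length of that remaining part, the vertical vector $V(\pm1,0)=(0,m(\pm1))$ points into $\Omega$, so $V\cdot\nu\geq 0$ there. For the first hypothesis, boundedness is clear on the compact $\overline\Omega$, and on every compact subinterval of $(-b',b)$ the functions $\phi_1,\phi_2$, hence $m$, are Lipschitz by (N2); thus everything reduces to the behaviour of $m$ near the two extreme abscissae, where $\phi_1$ and $\phi_2$ acquire vertical tangents.

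This last point is exactly where the assumption that $\partial\Omega$ contains no arc of Type II is needed, and I expect it to be the main obstacle. Write $g(\alpha)=\alpha+\phi_1(\alpha)\phi_1'(\alpha)$ for the abscissa at which the inward normal of the upper graph meets $Ox$; the $C$-GNP of $\Omega$ amounts to $|g|\leq 1$ on the upper graph and $|\alpha+\phi_2\phi_2'|\leq 1$ on the lower one, so the normal at the tip $(b,0)$ passes precisely through the endpoint $(1,0)$ of $C$, and near the tip $\phi_1\sim c\sqrt{b-\alpha}$, $\phi_2\sim -c'\sqrt{b-\alpha}$ with $c,c'\geq\sqrt{2(b-1)}$. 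The key claim to establish is that, under the absence of Type II arcs, $m$ vanishes identically on one-sided neighbourhoods of $\alpha=b$ and $\alpha=-b'$: in the model situation this is because the extreme cap of $\Omega$ is then forced to be the symmetric arc of Type I centred at $(1,0)$ (resp. $(-1,0)$) of radius $b-1$ (resp. $b'-1$) — any circular cap compatible with the $C$-GNP but centred elsewhere, or not lying in $\{x\geq1\}$ (resp. $\{x\leq-1\}$), would be of Type II, and a non-symmetric cap would leave $m$ merely H\"older of exponent $1/2$, not Lipschitz, at the tip. With $m\equiv 0$ near the tips, $m$ — hence $V$ — is Lipschitz on $\mathbb{R}^2$, and Proposition 2.1 concludes. (Should one wish to avoid the global packaging, one can check (N3) for $\Omega^t$ directly from the first-order computation $\partial_t g^t|_{t=0}=-(m\phi_1)'$ and its analogue on the lower graph, which vanishes on the Type I caps frozen by the flow and, off them, leaves room since there $|g^0|<1$; the passage from this infinitesimal estimate to the uniform bound $|g^t|\leq1$ for small $t$ is then a compactness argument on $\overline\Omega$. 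Conditions (N1), (N2) and (N4) for $\Omega^t$ follow at once from the explicit formulas for $\phi_1^t$ and $\phi_2^t$.)
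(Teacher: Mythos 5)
Your packaging of the continuous Steiner symmetrization as a flow $\Omega^t=(\mathrm{Id}+tV)(\Omega)$ with $V(\alpha,y)=(0,m(\alpha)-y)$, $m=\tfrac12(\phi_1+\phi_2)$, is correct as an identification, and reducing the proposition to Proposition 2.1 is a genuinely different route from the paper. But the step you yourself flag as ``the key claim'' is a real gap, and it does not hold under the stated hypotheses. The hypothesis is only that $\partial\Omega$ contains no \emph{circular} arc of Type II; it does not force the extreme caps to be circular at all. One can take the upper cap to be a Type I arc centred at $(1,0)$ and the lower cap to be any non-circular curve with $\alpha+\phi_2(\alpha)\phi_2'(\alpha)$ taking non-constant values in $[-1,1]$: such a domain has the $C$-GNP, contains $C$, and contains no arc of Type II (near that tip it contains no arc whatsoever), yet $m(\alpha)\sim c\sqrt{b-\alpha}$ with $c\neq 0$, so $m$ is only H\"older of exponent $1/2$ at the tip. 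Then $V$ is not Lipschitz, Proposition 2.1 cannot be invoked, and your fallback argument fails for the same reason: the caps are not ``frozen by the flow'' unless $m\equiv 0$ there, which is exactly the unproven claim. So the proposal never actually treats the saturated points (those whose normal meets an endpoint of $C$), which is where the whole difficulty of the proposition sits and where the Type II hypothesis is really used.

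The paper's proof avoids this entirely: it never needs $m$ to be Lipschitz or the caps to be symmetric. It writes the $C$-GNP for $\Omega^t$ as the inequality $-1\leq x+\phi_i^{t}(x)(\phi_i^{t})'(x)\leq 1$, observes that points whose normal meets the relative interior of $C$ satisfy it strictly and hence survive small $t$, and at a saturated point $x_0$ (say $x_0+\phi_1(x_0)\phi_1'(x_0)=-1$) it computes the perturbation term $\eta(t,x_0)$ explicitly and shows it has the favourable sign, using the $C$-GNP inequality satisfied by the \emph{opposite} graph $\phi_2$ together with $\frac{\phi_2}{\phi_1}+\frac{\phi_1}{\phi_2}\geq 2$. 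If you want to salvage your approach, you must either prove the freezing claim under additional hypotheses (e.g.\ for the minimizer of $J$, whose saturated boundary pieces are circular arcs), or replace the appeal to Proposition 2.1 at the tips and at all saturated points by a direct sign analysis of the perturbed quantity $x+\phi_i^{t}(\phi_i^{t})'$, which is precisely the paper's computation.
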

\begin{proof}
Pour $t$ petit, le symétrisé $\Omega^{t}$, par la symétrisation de Steiner continue, d'un ouvert $\Omega$ qui possède la $C$-GNP,
vérifie la même propriété pour les points de son bord dont la normale (lorsqu'elle existe) rencontre l'intérieur relatif de $C$.
Pour les points de $\partial\Omega^{t}$ qui sont sur des arcs de cercle, nous allons démontrer que pour les arcs de Type I,
la $C$-GNP est conservée. En effet, Supposons que $\partial\Omega$ est donné par les fonctions $\phi_{1}$ et $\phi_{2}$,
alors, pour $t$ petit, $\partial\Omega^{t}$ est donné par les fonctions $\phi^{t}_{1}$ et $\phi^{t}_{2}$. Pour $\Omega^{t}$, la $C$-GNP se traduit par :
$$-1\leq x+\phi^{t}_{i}(x)(\phi_{i}^{t})'(x)\leq 1,\;\; i=1,\;2.$$
Il s'agit donc de vérifier que
$$-1\leq x+\phi_{1}(x)\phi_{1}^{'}(x)-\eta(t)\leq 1,$$
$$\eta(t,x)=-\frac{t}{2}[2\phi_{1}\phi^{'}_{1}(x)-(\phi_{1}^{'}(x)\phi_{2}(x)+\phi_{1}(x)\phi^{'}_{2}(x))]+
\frac{t^{2}}{4}(\phi_{1}(x)-\phi_{2}(x))(\phi^{'}_{1}(x)-\phi^{'}_{2}(x)).$$
Plaçons-nous maintenant en un point de $\partial\Omega$, d'abscisse $x_{0}$ tel que la normale intérieure coupe le segment $C$, par exemple, en $(-1,0)$. On a
$$x_{0}+\phi_{1}(x_{0})\phi^{'}_{1}(x_{0})=-1.$$
Par ailleurs, la $C$-GNP, pour $\phi_{2}$ fournit
$$\phi_{2}^{'}(x_{0})\geq -\frac{1+x_{0}}{\phi_{2}(x_{0})}.$$
Au point d'abscisse $x_{0}$, nous obtenons donc
$$\phi_{1}^{'}(x_{0})\phi_{2}(x_{0})+\phi_{1}(x_{0})\phi^{'}_{2}(x_{0})\geq -(1+x_{0})(\frac{\phi_{2}(x_{0})}{\phi_{1}(x_{0})}+
\frac{\phi_{1}(x_{0})}{\phi_{2}(x_{0})}).$$
Or $$\frac{\phi_{2}(x_{0})}{\phi_{1}(x_{0})}+\frac{\phi_{1}(x_{0})}{\phi_{2}(x_{0})}\geq 2$$
(et même $>2$ si $\phi_{2}(x_{0})\neq\phi_{1}(x_{0})$, l'égalité correspond au cas où on ne bouge pas). Donc, si $1+x_{0}>0$ on a bien
$$\phi_{1}^{'}(x_{0})\phi_{2}(x_{0})+\phi_{1}(x_{0})\phi^{'}_{2}(x_{0})\geq -2(1+x_{0})=2\phi_{1}(x_{0})\phi_{2}^{'}(x_{0}).$$
Donc le terme $\eta(t,x_{0})$ sera négatif. Par suite l'inégalité ci-dessus est vérifiée au point d'abscisse $x_{0}$.
Par ailleurs, comme $x_{0}+\phi_{1}(x_{0})\phi_{1}^{'}(x_{0})=-1,$ et $t$ est assez petit, alors
$$-1\leq x_{0}+\phi_{1}(x_{0})\phi_{1}^{'}(x_{0})-\eta(t,x_{0})\leq 1.$$
\end{proof}

\section{Résultats principaux}
Sans conditions sur $f$ et $k$, le problème $QS(f,k)$ peut ne pas avoir de solutions. En effet, si $(\Omega,u)$ en est une solution telle que $\Omega$ est à
bord lipschitzien et $u\in H^{2}(\Omega)$, alors la formule de Green nous donne la condition nécessaire suivante :
$$\int_{C}f=k|\partial\Omega|.$$
($|\partial\Omega|$ étant le périmètre de $\Omega$). En particulier, si le rapport $\frac{\int_{C}f}{k}$ est trop petit pour qu'on puisse trouver un ouvert $\Omega$ englobant le convexe $C$ dont le périmètre vérifie $|\partial\Omega|=\frac{\int_{C}f}{k}$, c'est à dire si $|\partial C|>\frac{\int_{C}f}{k}$, il est claire que le problème $QS(f,k)$ ne peut pas avoir de solution. Cela signifie que le minimum obtenu pour la fonctionnelle $J$ n'est pas une solution du problème $QS(f,k)$ puisque le convexe $C$ n'y est pas inclus strictement. Autrement dit, dans le cas qui nous intéresse ici, le minimum $\Omega$ peut venir toucher le segment $C$ (en un point anguleux ou de rebroussement voire un segment).
\begin{theorem}
$\Omega$ est solution du problème $QS(a\delta_{C},k)$ si et seulement si $a>2k$.
\end{theorem}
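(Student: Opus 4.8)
The plan is to prove the two implications separately: necessity from a single integration by parts, sufficiency from the fact --- recalled in the introduction --- that $QS(a\delta_C,k)$ is the Euler--Lagrange equation of $J$ on $\mathcal{O}_C$, combined with the maximum principle.

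\emph{Necessity.} Let $(\Omega,u)$ be a solution with $\Omega$ Lipschitz and $u\in H^2(\Omega)$. Green's formula applied to $-\Delta u=a\delta_C$ with $u=0$ and $|\nabla u|=k$ on $\partial\Omega$ gives
\[
2a=\int_C a\delta_C=-\int_{\partial\Omega}\partial_\nu u\,ds=\int_{\partial\Omega}|\nabla u|\,ds=k\,|\partial\Omega|.
\]
Since a solution contains $C$ strictly, the bounded open set $\Omega$ contains the length-$2$ segment together with a genuine neighbourhood of it, so $|\partial\Omega|>4$ (the perimeter of the degenerate segment $C$); hence $2a>4k$, i.e.\ $a>2k$.

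\emph{Sufficiency --- reduction to a strict inclusion.} Assume $a>2k$. The functional $J(\Omega)=\int_\Omega(|\nabla u|^2-2fu+k^2)\,dx$ attains its minimum on $\mathcal{O}_C$ at some $\Omega$ (the existence of such a minimiser is established in \cite{ba1}). I claim it suffices to show $C\subset\subset\Omega$. Indeed, once $\partial\Omega\cap C=\emptyset$, Proposition 2.1 makes every bounded Lipschitz vector field $V$ an admissible perturbation, so, $\Omega$ being a minimiser, the Hadamard derivative
\[
\frac{d}{dt}\Big|_{t=0}J(\Omega_t)=\int_{\partial\Omega}\big(k^2-|\nabla u|^2\big)(V\cdot\nu)\,ds
\]
vanishes for all such $V$, forcing $|\nabla u|=k$ a.e.\ on $\partial\Omega$; together with $-\Delta u=a\delta_C$ in $\Omega$ and $u=0$ on $\partial\Omega$ this is precisely the statement that $(\Omega,u)$ solves $QS(a\delta_C,k)$.

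\emph{Strict inclusion.} I would argue by contradiction: suppose $\partial\Omega\cap C\neq\emptyset$. By Lemme 2.1 and Remarque 2.1, $\Omega$ is convex in the direction $Oy$ and already contains the open segment $(-1,1)\times\{0\}$, so the contact can occur only near an endpoint $(\pm1,0)$ --- as a corner, a cusp, or a sub-segment. On each vertical slice close to the contact, $u(x,\cdot)$ is a tent-shaped profile vanishing at the two boundary points, and the jump relation $u_y(x,0^-)-u_y(x,0^+)=a$, in which $u_y(x,0^-)>0>u_y(x,0^+)$ by the strong maximum principle, forces $|\nabla u|\ge a/2>k$ on at least one of the two faces. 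Hence there is an arc $\Gamma$ of $\partial\Omega$ reaching the contact set along which $|\nabla u|>k$. Choose a bounded Lipschitz $V$ supported near $\Gamma$, with $V\cdot\nu\ge0$ on $\partial\Omega$ and $V\cdot\nu>0$ on a subset of $\Gamma$ of positive surface measure. By Proposition 2.1 (the case $V\cdot\nu\ge0$ on $\partial\Omega\cap C$), $\Omega_t\in\mathcal{O}_C$ for $t$ small, yet
\[
\frac{d}{dt}\Big|_{t=0}J(\Omega_t)=\int_{\partial\Omega}\big(k^2-|\nabla u|^2\big)(V\cdot\nu)\,ds<0,
\]
which contradicts the minimality of $\Omega$. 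Therefore $\partial\Omega\cap C=\emptyset$, i.e.\ $C\subset\subset\Omega$, and the reduction above concludes the proof.

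\emph{Main difficulty.} The delicate point is this strict-inclusion step. One must (i) justify the Hadamard first-variation formula when the source is the singular measure $a\delta_C$ and the moving part of the boundary may come arbitrarily close to $C$; (ii) establish the quantitative bound $|\nabla u|>k$ on a boundary arc touching the contact set, for a minimiser that is a priori only Lipschitz --- this is where the maximum principle is genuinely needed, through comparison of $u$ near $C$ with one-dimensional tent barriers $\tfrac{a}{2}(d-|y|)$ and with the states of slightly enlarged domains; and (iii) check that the chosen outward perturbation remains admissible even when the contact is a cusp or a contact segment, which is exactly the content of the sign condition $V\cdot\nu\ge0$ on $\partial\Omega\cap C$ in Proposition 2.1. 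The existence of a minimiser of $J$ on $\mathcal{O}_C$, quoted from \cite{ba1}, is the remaining ingredient.
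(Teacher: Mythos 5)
Your necessity argument is essentially the paper's (Proposition 3.1): Green's formula giving $2a=k|\partial\Omega|$ and $|\partial\Omega|>4$; only note that one must assume $u\in H^{2}(\Omega\setminus C)$, not $u\in H^{2}(\Omega)$, since $\Delta u$ is a measure supported on $C$. The problem is the sufficiency direction, where your key step is not proved. The pivot of your argument is the claim that if $\partial\Omega$ touches $C$ (necessarily at an endpoint), then $|\nabla u|\ge a/2>k$ on a boundary arc of positive measure reaching the contact point. What the jump relation $u_{y}(x,0^{-})-u_{y}(x,0^{+})=a$ gives you is a bound on $u_{y}$ \emph{on the segment $C$ itself}; transferring it to $|\nabla u|$ \emph{on $\partial\Omega$} requires a genuine barrier or comparison argument near a contact point which is a priori only a corner, a cusp, or a contact segment, where $u$ need not have the regularity needed for a Hopf-type estimate, nor for the Hadamard formula $\frac{d}{dt}J(\Omega_{t})|_{t=0}=\int_{\partial\Omega}(k^{2}-|\nabla u|^{2})V\cdot\nu\,ds$ that you then invoke to contradict minimality. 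You list exactly these points yourself as the ``main difficulty'' without resolving them, so the proof of sufficiency is a plan, not a proof.

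The paper avoids this local analysis at $C$ altogether. It minimizes $J$ on $\mathcal{O}_{C,B}=\{B\subset\omega\subset D,\ \omega\in\mathcal{O}_{C}\}$, i.e.\ it imposes the unit ball $B$ (whose closure contains $C$) as an interior obstacle, and then introduces a second, auxiliary shape problem: minimize $F(\omega)=\int_{\omega}(|\nabla u|^{2}-2fu+(\tfrac{a}{2})^{2})dx$ over subdomains of the first minimizer $\Omega$. Proposition 2.1 yields one-sided optimality conditions on the constrained parts of the boundaries ($|\nabla u|\le k$ on $\partial\Omega\cap\partial B$, $|\nabla u^{*}|\ge \tfrac{a}{2}$ on $\partial\Omega^{*}\cap\partial\Omega$, etc.), and the maximum principle comparing $u$ and $u^{*}$ gives, at any point of $\partial\Omega\cap\partial\Omega^{*}\cap\partial B$, the chain $\tfrac{a}{2}\le|\nabla u^{*}|\le|\nabla u|\le k$, contradicting $a>2k$; hence $\partial\Omega\cap\partial B=\emptyset$ and $\Omega$ strictly contains $B\supset C$. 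If you want to salvage your route, you would have to supply the quantitative gradient bound on $\partial\Omega$ near the contact (item (ii) of your list) and justify the first-variation formula there; as written, that gap is exactly where the hypothesis $a>2k$ has to do its work, so the sufficiency half of the theorem is not established by your argument.
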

Dans son article \cite{fr}, A. Friedman a montré qu'en deux dimensions, si l'ouvert $\Omega$ solution du problème $QS(a\delta_{C},k)$ existe et est à bord lipschitzien alors il est analytique. Le bord du minimum que nous obtenons ici est analytique par morceaux :
il contient $\Gamma_{1}=\partial\Omega\setminus C$ (où on sait que $|\nabla u|=k$) qui est analytique et $\Gamma_{2}$ constitué des arcs de cercles de Type I et II (une réunion de parties analytiques). Il restera alors les points de jonction entre les parties de $\Gamma_{1}$ et celles de $\Gamma_{2}$. Par conséquent, si on arrive à avoir des conditions sur $a$ et $k$ pour que $\partial\Omega$ soit analytique en ces points, alors $\Omega$ sera soit un cercle, soit $\partial\Omega=\Gamma_{1}$ qui fournirait une solution du problème $QS(a\delta_{C},k)$.\\

Le théorème qui suit dit, entre autre, que si le bord de $\Omega$ ne contient pas d'arcs de Type II, alors il est analytique dès que $a>2k$.

\begin{theorem}
Soit $\Omega$ un minimum de la fonctionnelle $J$ sur $\mathcal{O}_{C}$. Supposons que $\partial\Omega$ ne contient aucun arc de cercle centré aux extrémités de $C$ et que $u\in H^2(V_{\partial\Omega})$ ($V_{\partial\Omega}$ un voisinqge de $\partial\Omega$). Alors $\Omega$ est une solution classique (i.e $|\nabla u|=k$ sur $\partial\Omega$) du problème $QS(a\delta_{C},k)$ qui est symétrique par rapport à l'axe $Ox$ et dont le bord $\partial\Omega$ est analytique.
\end{theorem}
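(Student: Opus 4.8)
The plan is to follow the four-stage roadmap announced just before the statement: recover the overdetermined condition by shape differentiation, deduce analyticity of the free part of the boundary, obtain the $Ox$-symmetry through continuous Steiner symmetrisation, and finally upgrade this to global analyticity.

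First I would write down the first-order optimality condition. Since $\Omega$ minimises $J$ on $\mathcal{O}_C$ and, by Proposition 2.1, the perturbed domain $\Omega_t=(I+tV)(\Omega)$ stays in $\mathcal{O}_C$ for $t$ small and every bounded Lipschitz field $V$ (with the sole restriction $V\cdot\nu\geq 0$ on $\partial\Omega\cap C$), the shape derivative of $J$ is nonnegative in every admissible direction. Because $f=a\delta_C$ is carried by $C$, which lies in the interior of $\Omega$, $u$ is harmonic near $\partial\Omega\setminus C$; using $u=0$ on $\partial\Omega$ and $u\in H^2(V_{\partial\Omega})$, the classical computation gives $dJ(\Omega;V)=\int_{\partial\Omega}\bigl(k^2-|\nabla u|^2\bigr)\,V\cdot\nu\,ds$. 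By Lemma 2.2 and the Remarque that follows it, $\partial\Omega$ is the union of two graphs over $Ox$ which, since $\Omega$ contains the interior of $C$, stay strictly off the axis on $(-1,1)$; hence $\partial\Omega\cap C\subseteq\{(-1,0),(1,0)\}$ is $\mathcal H^1$-negligible. On $\partial\Omega\setminus C$ both $\pm V$ are admissible, so $dJ(\Omega;V)=0$ and I conclude $|\nabla u|=k$ a.e.\ on $\partial\Omega$. Re-running the maximum-principle argument of Theorem 3.1, now that this overdetermined relation is available, the minimiser contains $C$ strictly ($C\subset\subset\Omega$), so $\partial\Omega=\partial\Omega\setminus C=:\Gamma_1$ and $u$ is harmonic in a full neighbourhood of $\partial\Omega$; the partial hodograph transform, or the two-dimensional regularity result of A.\ Friedman \cite{fr} quoted above, then makes $\Gamma_1$ real-analytic.

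Next I would handle the symmetry by continuous Steiner symmetrisation. The hypothesis that $\partial\Omega$ carries no circular arc centred at $(\pm1,0)$ rules out in particular arcs of Type II, so Proposition 2.6 applies: the continuous Steiner symmetrisations $\Omega^t$ of Definition 2.9 belong to $\mathcal{O}_C$ for $t$ small. Writing, via Green's formula, $J(\Omega)=k^2|\Omega|-a\int_C u_\Omega\,d\mathcal H^1$, the area $|\Omega^t|$ is invariant under symmetrisation while $t\mapsto\int_C u_{\Omega^t}\,d\mathcal H^1$ is nondecreasing — because $C$ lies on the symmetrisation axis and $a\delta_C$ is its own symmetric decreasing rearrangement in the $Oy$ direction — so $t\mapsto J(\Omega^t)$ is nonincreasing. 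Minimality forces $J(\Omega^t)\equiv J(\Omega)$ for $t$ small, hence $\frac{d}{dt}J(\Omega^t)\big|_{t=0}=0$, and the equality case of the symmetrisation inequality yields that $\Omega$ is symmetric with respect to $Ox$. Combining everything, $\partial\Omega=\Gamma_1$ is analytic off $C$, $\Omega\supset\supset C$, $\Omega$ is symmetric, and $\partial\Omega$ contains no circular arc, so the dichotomy described after Theorem 3.1 collapses to $\partial\Omega=\Gamma_1$, which is then globally analytic; together with $|\nabla u|=k$ on $\partial\Omega$ this exhibits $\Omega$ as a classical, $Ox$-symmetric, analytic-boundary solution of $QS(a\delta_C,k)$.

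The main obstacle is the symmetry step: one must prove that continuous Steiner symmetrisation does not increase $J$, coupling the continuous perimeter (area-preserving) inequality with a monotonicity property of the bilinear Green energy $\iint G_{\Omega^t}\,d(a\delta_C)\,d(a\delta_C)$ along the symmetrisation flow, and then extract the rigidity statement from the vanishing of the $t$-derivative at $t=0$. A secondary technical point is to justify the shape-derivative formula in the presence of the measure source $a\delta_C$ and with only $H^2$ regularity of $u$ near $\partial\Omega$, and to verify that $\partial\Omega\cap C$ is indeed negligible so that the overdetermined condition is recovered on the whole of $\partial\Omega$.
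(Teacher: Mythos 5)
Your roadmap (shape derivative to recover $|\nabla u|=k$, continuous Steiner symmetrisation for the symmetry, Friedman's result for analyticity) is broadly the paper's, but the symmetry stage contains a genuine gap. You pass from "$J(\Omega^t)=J(\Omega)$ for small $t$" to "$\Omega$ is symmetric with respect to $Ox$" by invoking "the equality case of the symmetrisation inequality", and in your last paragraph you yourself flag this rigidity step as the main obstacle without supplying it. This is precisely the hard point: equality (or vanishing $t$-derivative) in the P\'olya--Szeg\H{o}-type inequality for continuous Steiner symmetrisation does \emph{not} yield symmetry of the domain. Brock's theorem, which the paper uses, only gives \emph{local} symmetry in the $Oy$ direction: a decomposition of $\Omega$ into pairs of components $\omega_1^k,\omega_2^k$ reflected about axes $\{y=y_0\}$ (a priori different from $\{y=0\}$), plus a set where $\partial u/\partial y=0$. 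The paper then needs a further argument to conclude: with $v=u(\sigma(\cdot))$ and $\tilde u=u-v$, it observes that $\tilde u$ is harmonic on $\Omega\setminus(C\cup\sigma(C))$ and vanishes on the open set $\omega_1$, applies unique continuation to get $u=v$ off $C\cup\sigma(C)$, uses the explicit fundamental solution of $-\Delta w=a\delta_C$ to see that $u$ is continuous across $C$ and hence $u=v$ on all of $\Omega$, and finally derives a contradiction with the harmonicity of $u$ off $C$ unless $y_0=0$ and $\sigma(C)=C$. Without this (or an equivalent rigidity argument exploiting the specific source $a\delta_C$), the conclusion of $Ox$-symmetry does not follow from your energy identities.

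Two smaller points. First, the comparison $J(\Omega^t)\le J(\Omega)$ must be made through the variational characterisation of the potential $u_t$ of $\Omega^t$ against the symmetrised function $u^t\in H^1_0(\Omega^t)$ (using $\int_{\Omega^t}|\nabla u^t|^2\le\int_\Omega|\nabla u|^2$ and the monotonicity of $\int_C u$ under the flow), and it requires Proposition 2.6 — hence the absence of Type II arcs — to ensure $\Omega^t\in\mathcal O_C$; your Green-formula rewriting of $J$ is fine but this comparison is exactly what has to be checked, and the subsequent application of Brock concerns the Dirichlet integrals, not $J$ directly. Second, your first stage claims that two-sided variations are admissible on all of $\partial\Omega\setminus C$, giving $|\nabla u|=k$ there unconditionally; on constraint-active circular arcs only one-sided information is available (this is why the paper needs Remark 4.2 to recover the condition on Type I arcs), but under the theorem's hypothesis excluding arcs centred at the endpoints of $C$ this does not damage your conclusion.
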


\subsection{Preuve du Théoreme 3.1}
La démonstration de ce théorème se fait en prouvant les deux propositions suivantes.
\begin{proposition}
Soit $(\Omega,u)$ une solution du problème $QS(a\delta_{C},k)$. Si $\Omega$ est à bord lipschitzien et $u\in H^{2}(\Omega\setminus C)$ alors $a>2k$.
\end{proposition}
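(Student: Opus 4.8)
The plan is to combine the necessary condition coming from Green's formula with an elementary isoperimetric estimate for domains strictly enclosing the segment $C$.

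\emph{Step 1: the balance identity $k|\partial\Omega| = 2a$.} Since $f = a\delta_{C}\geq 0$, the maximum principle gives $u\geq 0$ in $\Omega$; as $u$ vanishes on $\partial\Omega$ with $|\nabla u| = k$ there, necessarily $\partial_{\nu}u = -k$ on $\partial\Omega$ ($\nu$ the outward unit normal). Morally, the outward flux of $-\nabla u$ through $\partial\Omega$ equals the total charge $\int_{C}a\,d\mathcal{H}^{1} = a\,\mathcal{H}^{1}(C) = 2a$, and since $-\partial_{\nu}u = k$ this flux equals $k|\partial\Omega|$, whence $k|\partial\Omega| = 2a$. Rigorously -- and this is the only place where the hypothesis $u\in H^{2}(\Omega\setminus C)$ is needed -- one excises a shrinking tubular neighbourhood $N_{\varepsilon}$ of $C$, applies the divergence theorem to the harmonic function $u$ on $\Omega\setminus\overline{N_{\varepsilon}}$ (where $\nabla u$ now has an $L^{2}$ trace on each boundary piece), and lets $\varepsilon\to 0$, the flux across $\partial N_{\varepsilon}$ converging to the mass $2a$ carried by $a\delta_{C}$. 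This is exactly the computation behind the displayed necessary condition $\int_{C}f = k|\partial\Omega|$ of Section~3, carried out on $\Omega\setminus C$ instead of on $\Omega$.

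\emph{Step 2: $|\partial\Omega| > 4$.} Let $I=(x^{-},x^{+})$ be the connected component of $\{x\in\mathbb{R}:(x,0)\in\Omega\}$ containing $[-1,1]$. Because a solution of $QS(a\delta_{C},k)$ must contain $C$ \emph{strictly}, one has $x^{-}<-1$ and $x^{+}>1$, while $(x^{\pm},0)\in\partial\Omega$. Passing to the outer boundary of $\Omega$ (a finite union of Lipschitz Jordan curves, since $\partial\Omega$ is Lipschitz), these two points cut it into an arc contained in $\{y\geq 0\}$ and an arc contained in $\{y\leq 0\}$, each joining $(x^{-},0)$ to $(x^{+},0)$ and hence of length at least the chord $x^{+}-x^{-}>2$. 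Therefore $|\partial\Omega|\geq 2(x^{+}-x^{-})>4$. Together with Step~1 this gives $2a = k|\partial\Omega|>4k$, that is $a>2k$.

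The two points I would merely sketch, neither being a genuine difficulty, are the limit $\varepsilon\to 0$ for the flux across $\partial N_{\varepsilon}$ (transparent from the Newtonian potential of a segment, or obtained softly by testing $-\Delta u = a\delta_{C}$ against a cutoff equal to $1$ near $C$ and supported away from $\partial\Omega$) and the mild plane-topology statement about the outer boundary used in Step~2. The one indispensable ingredient is the \emph{strict} inclusion $C\subset\Omega$ built into the definition of $QS(a\delta_{C},k)$: it upgrades $x^{+}-x^{-}\geq 2$ to a strict inequality, and it is precisely what separates an honest solution of the free boundary problem from a bare minimiser of $J$ that may come to touch $C$.
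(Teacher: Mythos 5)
Your proposal is correct and follows essentially the same route as the paper: excise a shrinking neighbourhood of $C$, apply the divergence theorem to the harmonic function $u$ on the excised domain, let the neighbourhood shrink so that the flux through its boundary tends to the mass $2a$ of $a\delta_{C}$, use $-\frac{\partial u}{\partial\nu}=k$ on $\partial\Omega$ to obtain $k|\partial\Omega|=2a$, and conclude from $|\partial\Omega|>4$ that $a>2k$. One small caveat on your Step 2 (which the paper simply asserts without proof): your argument tacitly assumes that $(x^{-},0)$ and $(x^{+},0)$ lie on the \emph{outer} boundary component and that the two arcs they determine stay in the closed half-planes $\{y\geq 0\}$ and $\{y\leq 0\}$, neither of which is automatic (these points could lie on the boundary of a hole, and the outer curve may cross the axis elsewhere); a cleaner justification is that the outer Jordan curve of $\Omega$ encloses a neighbourhood of $C$, so its length is at least the perimeter of its convex hull, which strictly exceeds $|\partial C|=4$.
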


\begin{proof}
soient $\epsilon\in[0,1]$ et $V_{\epsilon}=[-1-\epsilon,1+\epsilon]\times[-\epsilon,\epsilon]$. Posons $\Omega_{\epsilon}=\Omega\setminus V_{\epsilon}$.
$u$ est harmonique sur $\Omega_{\epsilon}$, donc
$$0=\int_{\Omega_{\epsilon}}\Delta u=\int_{\partial\Omega}\frac{\partial u}{\partial\nu}+\int_{\partial V_{\epsilon}}-\frac{\partial u}{\partial\nu}.$$
En écrivant $u=h-\frac{a}{2}|y|$ ($h$ étant une fonction harmonique sur $\Omega$) et tendant $\epsilon$ vers $0$, on obtient
$$\lim_{\epsilon\rightarrow 0}\int_{\partial V_{\epsilon}}\frac{\partial u}{\partial\nu}=-2a.$$
Mais $-\frac{\partial u}{\partial\nu}=k$ sur $\partial\Omega$, donc $k|\partial\Omega|=2a$.
$C$ étant strictement inclus dans $\Omega$, $|\partial\Omega|>4$, d'où le résultat.
\end{proof}

\begin{proposition}
Si $a>2k$ alors il existe un ouvert borné $\Omega$ qui contient strictement la boule $B(O,1)$ et tel que $\Omega$ minimise
la fonctionnelle $J$ sur la classe des ouverts vérifiant la $C$-PGN.
\end{proposition}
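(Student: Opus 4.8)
Le plan est de traiter successivement : l'existence d'un minimiseur de $J$ sur $\mathcal{O}_{C}$, son \'equation d'Euler via la d\'erivation par rapport au domaine, puis --- c'est le point d\'elicat --- le fait que l'hypoth\`ese $a>2k$ interdit \`a ce minimiseur de toucher $C$, ce qui fournira l'inclusion stricte $C\subset\subset\Omega$ (celle dont on a besoin pour que $\Omega$ r\'esolve $QS$).

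Pour l'existence, on r\'e\'ecrirait d'abord la fonctionnelle, via la formule de Green ($\int_{\Omega}|\nabla u_{\Omega}|^{2}=\int_{\Omega}fu_{\Omega}=a\int_{C}u_{\Omega}\,ds$), sous la forme
$$J(\Omega)=k^{2}|\Omega|-a\int_{C}u_{\Omega}\,ds.$$
En d\'ecomposant $u_{\Omega}=\Phi+h_{\Omega}$, o\`u $\Phi$ est le potentiel logarithmique de simple couche de $a\delta_{C}$ et $h_{\Omega}$ est harmonique dans $\Omega$ avec $h_{\Omega}=-\Phi$ sur $\partial\Omega$, le principe du maximum borne $\max_{C}u_{\Omega}$, donc $\int_{C}u_{\Omega}\,ds$, par une quantit\'e au plus de l'ordre de $\log(\mathrm{diam}\,\Omega)$ ; compte tenu de la g\'eom\'etrie impos\'ee par la $C$-GNP (convexit\'e dans la direction $Oy$, Lemme 2.2, et propri\'et\'e $C$-SP), le terme $k^{2}|\Omega|$ l'emporte lorsque $\mathrm{diam}\,\Omega\to+\infty$ dans $\mathcal{O}_{C}$, de sorte qu'une suite minimisante $(\Omega_{n})$ est de diam\`etre born\'e. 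Les conditions (N2) et (N3) fournissant une condition de c\^one uniforme le long de $(\Omega_{n})$, la Proposition 2.1 et la compacit\'e de $\mathcal{O}_{C}$ (cf. \cite{ba1}) permettraient, quitte \`a extraire, de supposer $\Omega_{n}\to\Omega\in\mathcal{O}_{C}$ au sens de Hausdorff avec $u_{\Omega_{n}}\to u_{\Omega}$ dans $H^{1}$ ; la semi-continuit\'e inf\'erieure de $J$ donnerait alors que $\Omega$ r\'ealise l'infimum.

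Pour l'\'equation d'Euler, la Proposition 2.1 assure que $\Omega_{t}=(\mathrm{Id}+tV)(\Omega)$ reste dans $\mathcal{O}_{C}$ pour $t$ petit d\`es que $V\cdot\nu\geq 0$ sur $\partial\Omega\cap C$, sans contrainte sur $\partial\Omega\setminus C$ ; comme la d\'eriv\'ee de forme de $J$ dans la direction $V$ vaut $\int_{\partial\Omega}(k^{2}-|\nabla u_{\Omega}|^{2})\,V\cdot\nu\,ds$, la minimalit\'e de $\Omega$ donnerait $|\nabla u_{\Omega}|=k$ sur $\partial\Omega\setminus C$ et $|\nabla u_{\Omega}|\leq k$ sur $\partial\Omega\cap C$ ; de plus la condition (N1), qui impose $]-1,1[\times\{0\}\subset\Omega$, r\'eduit $\partial\Omega\cap C$ \`a un sous-ensemble de $\{(-1,0),(1,0)\}$. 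Il resterait \`a montrer que $\partial\Omega\cap C=\emptyset$ : si $\partial\Omega$ touchait $C$ en l'une de ses extr\'emit\'es, $\Omega$ devrait s'y pincer, et dans la r\'egion mince ainsi form\'ee $u_{\Omega}$ se comporterait comme la solution du probl\`eme unidimensionnel $-u''=a\delta_{0}$ pos\'e entre les deux graphes $\phi_{1}$ et $\phi_{2}$ bordant $\Omega$ ; les modules des gradients de $u_{\Omega}$ sur ces deux arcs auraient alors pour somme, \`a la limite, l'intensit\'e $a$ de la couche, ce qui, joint \`a la condition surd\'etermin\'ee $|\nabla u_{\Omega}|=k$ sur chacun d'eux, forcerait $a=2k$, en contradiction avec $a>2k$. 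On conclurait $C\subset\subset\Omega$ ; la d\'eriv\'ee de forme s'annulant alors dans toutes les directions, $|\nabla u_{\Omega}|=k$ sur tout $\partial\Omega$ et $(\Omega,u_{\Omega})$ serait une solution de $QS(a\delta_{C},k)$.

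L'obstacle principal sera cette derni\`ere \'etape : rendre rigoureuse l'analyse de la r\'egion mince au voisinage d'une extr\'emit\'e de $C$, o\`u le gradient de $\Phi$ pr\'esente en outre une singularit\'e logarithmique, et en extraire le seuil exact. Intuitivement, c'est la contribution $a/2$ du potentiel de simple couche --- la moiti\'e de l'intensit\'e de la couche --- qui entre en comp\'etition avec $k$, et cette comp\'etition est \`a l'origine du facteur $2$. Une variante consisterait \`a comparer $\Omega$, s'il touchait $C$, avec un concurrent $\Omega'\in\mathcal{O}_{C}$ contenant strictement $C$ et v\'erifiant $J(\Omega')\leq J(\Omega)$ : cela revient \`a introduire un second probl\`eme d'optimisation (la minimisation de $J$ sur le sous-ensemble de $\mathcal{O}_{C}$ form\'e des ouverts contenant strictement $C$) et \`a \'etablir que les deux probl\`emes ont m\^eme infimum.
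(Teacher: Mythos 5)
Il y a une lacune r\'eelle au c{\oe}ur de votre proposition. Le point d\'ecisif --- montrer que l'hypoth\`ese $a>2k$ interdit au minimiseur de venir toucher la contrainte --- n'est pas d\'emontr\'e : vous le ramenez \`a une analyse asymptotique d'une \og r\'egion mince \fg{} au voisinage d'une extr\'emit\'e de $C$, o\`u $u$ se comporterait comme la solution de $-u''=a\delta_{0}$, et vous reconnaissez vous-m\^eme que cette \'etape reste \`a rendre rigoureuse. Elle est d'autant plus d\'elicate que la condition surd\'etermin\'ee $|\nabla u|=k$ n'a \'et\'e obtenue que sur $\partial\Omega\setminus C$ l\`a o\`u le bord est r\'egulier ; en un point de contact (anguleux, de rebroussement, voire un segment, comme le signale le d\'ebut de la section 3), ni la r\'egularit\'e de $u$ ni la validit\'e de la d\'eriv\'ee de forme ne sont acquises, de sorte que l'argument \og somme des gradients $=a$ donc $a=2k$ \fg{} reste heuristique. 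De plus, votre conclusion viserait l'inclusion stricte $C\subset\subset\Omega$, alors que l'\'enonc\'e demande que $\Omega$ contienne strictement la boule $B(O,1)$ : c'est ce cadre (la contrainte $B\subset\omega$) qui structure la preuve du papier, et votre plan ne l'aborde pas.

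La d\'emonstration du papier contourne pr\'ecis\'ement cette difficult\'e par un argument de comparaison, sans aucune analyse locale du contact. On minimise $J$ sur $\mathcal{O}_{C,B}=\{B\subset\omega\subset D,\ \omega\in\mathcal{O}_{C}\}$ (existence par le Th\'eor\`eme 4.3 de \cite{ba1}, la minoration venant du principe du maximum), ce qui donne $\Omega$ avec les conditions d'optimalit\'e $|\nabla u|\leq k$ sur $\partial\Omega\cap\partial B$ et $|\nabla u|=k$ sur $\partial\Omega\setminus\partial B$ (via la Proposition 2.1). Puis on introduit une \emph{seconde} fonctionnelle auxiliaire $F(\omega)=\int_{\omega}\bigl(|\nabla u|^{2}-2fu+(\tfrac{a}{2})^{2}\bigr)dx$, minimis\'ee sur les $\omega\in\mathcal{O}_{C,B}$ inclus dans $\Omega$ ; son minimiseur $\Omega^{*}$ v\'erifie $|\nabla u^{*}|\leq\frac{a}{2}$ sur $\partial\Omega^{*}\cap\partial B$, $|\nabla u^{*}|\geq\frac{a}{2}$ sur $\partial\Omega^{*}\cap\partial\Omega$ et $|\nabla u^{*}|=\frac{a}{2}$ ailleurs. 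Si $\partial\Omega\cap\partial B\neq\emptyset$, le principe du maximum appliqu\'e \`a $u$ et $u^{*}$ (avec $B\subset\Omega^{*}\subset\Omega$) combine ces conditions pour donner, en un point commun de $\partial\Omega\cap\partial\Omega^{*}\cap\partial B$, $\frac{a}{2}\leq|\nabla u^{*}|\leq|\nabla u|\leq k$, ce qui contredit $a>2k$ ; donc $\Omega$ contient strictement $B$. C'est cette id\'ee --- le probl\`eme auxiliaire avec la constante $(\frac{a}{2})^{2}$ et la comparaison des gradients par principe du maximum --- qui manque \`a votre proposition (vous l'effleurez dans votre derni\`ere phrase en \'evoquant un second probl\`eme d'optimisation, mais sans identifier la fonctionnelle $F$ ni le m\'ecanisme de comparaison) ; en l'\'etat, votre texte est un plan plausible mais pas une preuve.
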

Soit $B$ la boule (ouverte) unité de $\mathbb{R}^2$. Posons
$$\mathcal{O}_{C,B}=\{B\subset\omega\subset D,\;\;\omega\in\mathcal{O}_{C}\}.$$
Par le principe du maximum, la fonctionnelle
$$J(\omega)=\int_{\omega}(|\nabla u|^{2}-2fu+k^{2})dx$$
est minorée inférieurement et par le Theorème 4.3 \cite{ba1}, elle admet au moins un minimum $\Omega$.
Par ailleurs, si le minimum $\Omega$ est de classe $C^2$, la Proposition 2.1 ci-dessus appliquée à $\Omega$ et $B$ nous permet d'écrire
\begin{enumerate}
  \item[(C1)] $|\nabla u|\leq k$ sur $\partial\Omega\cap\partial B$, et
  \item[(C2)] $|\nabla u|=k$ sur $\partial\Omega\setminus\partial B$.
\end{enumerate}
Posons,
$$\mathcal{O}_{\Omega}=\{\omega\subset \Omega,\;\;\omega\in\mathcal{O}_{C,B}\},\;\text{et}$$
$$F(\omega)=\int_{\omega}(|\nabla u|^{2}-2fu+(\frac{a}{2})^{2})dx.$$
La fonctionnelle $F$ admet au moins un minimum $\Omega^{*}$ sur $\mathcal{O}_{\Omega}$. Si $\Omega^{*}$ est de classe $C^2$,
\quad
\begin{itemize}
  \item[(C3)] $|\nabla u^{*}|\leq \frac{a}{2}$ sur $\partial\Omega^{*}\cap\partial B$,
  \item[(C4)] $|\nabla u^{*}|\geq \frac{a}{2}$ sur $\partial\Omega^{*}\cap\partial\Omega$, et
  \item[(C5)] $|\nabla u^{*}|=\frac{a}{2}$ sur $\partial\Omega\setminus(\partial\Omega\cup\partial B)$.
\end{itemize}

Supposons par l'absurde que $\partial\Omega\cap\partial B\neq\emptyset$. Comme $B\subset\Omega^{*}\subset\Omega$, une des situations suivantes peut se produire :
\begin{enumerate}
  \item $\partial\Omega=\partial\Omega^{*}=\partial B$
  \item $\partial\Omega\neq\partial B$ et $\partial\Omega^{*}=\partial B$
  \item $\partial\Omega\neq\partial B$ et $\partial\Omega^{*}\neq\partial B$
  \item $\partial\Omega\neq\partial B$ et $\partial\Omega^{*}=\partial\Omega$
  \item $\partial\Omega\neq\partial B$ et $\partial\Omega^{*}\neq\partial\Omega$
\end{enumerate}

En appliquant le principe du maximum à $u$ et $u^{*}$ solutions du problème de Dirichlet sur (respectivement) $\Omega$ et $\Omega^{*}$, les conditions d'optimalité (C1),...,(C5) nous donnent, dans tous les cas de figure,
$$\frac{a}{2}\leq|\nabla u^{*}(x)|\leq|\nabla u(x)|\leq k,\;\forall x\in\partial\Omega\cap\partial\Omega^{*}\cap\partial B.$$
Obtenant ainsi une contradiction.
\begin{remark}
Dans \cite{bk}, les auteurs montrent que le problème $QS(a\delta_{C},k)$ admet une solution dès que $a>3.92k$.
\end{remark}

\subsection{Preuve du Théorème 3.2}

La démonstration de ce théorème utilise le Théorème 3.1 et la proposition suivante.
\begin{proposition}
Soit $\Omega$ le minimum de la fonctionnelle $J$ obtenu au Théorème 3.1. Si $\partial\Omega$) ne contient pas d'arc de type II, alors $\Omega$ est symétrique par rapport à l'axe $Ox$ et $u$ vérifie $u(x,-y)=u(x,y)$ pour tout $(x,y)\in \Omega$.
\end{proposition}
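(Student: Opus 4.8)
The plan is to use the continuous Steiner symmetrization from Definition 2.6 together with the minimality of $\Omega$ for $J$. First I would recall, via the Lemma 2.2 (and Remark 2.1), that since $\Omega$ has the $C$-GNP and contains the segment $C$, it is convex in the $Oy$ direction, so $\partial\Omega$ splits into two graphs $y=\phi_1(x)$ (upper) and $y=\phi_2(x)$ (lower); this is exactly the setting required to apply the continuous Steiner symmetrization, and by hypothesis $\partial\Omega$ contains no Type II arc, so Proposition 2.6 guarantees that for $t$ small the symmetrized domain $\Omega^t$ still has the $C$-GNP, i.e. $\Omega^t\in\mathcal{O}_C$ is admissible. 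The second ingredient is the classical fact that Steiner symmetrization does not increase the Dirichlet-type energy: if $u$ solves $-\Delta u=f$ on $\Omega$ with $u=0$ on $\partial\Omega$ and $f=a\delta_C$ is already symmetric with respect to $Ox$ (its support $C$ lies on the axis), then the solution $u^t$ on $\Omega^t$ satisfies $\int_{\Omega^t}|\nabla u^t|^2-2fu^t\le \int_\Omega |\nabla u|^2-2fu$, while the area term $k^2|\Omega^t|=k^2|\Omega|$ is preserved because each vertical slice keeps its length. Hence $J(\Omega^t)\le J(\Omega)$ for all small $t$, and since $\Omega$ is a minimum, equality must hold.

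Next I would analyze the equality case. The continuous symmetrization moves the center of each vertical slice toward the axis $x=0$ — wait, toward $Ox$ — with speed equal to the distance of that center to the axis $Ox$; equality in the energy inequality along the whole family $t\mapsto J(\Omega^t)$ forces the centers not to move at all, i.e. $\phi_1(x)=-\phi_2(x)$ for every $x$. This is precisely the statement that $\Omega$ is symmetric with respect to $Ox$. For the monotonicity of the energy I would invoke the standard rearrangement inequality (Pólya–Szegő for the gradient term, and the fact that $\int f u$ is nondecreasing under symmetrization when $f$ is itself symmetric and the slices are recentered about the axis carrying $f$); the continuous version, which produces a one-parameter family with $\frac{d}{dt}J(\Omega^t)\le 0$ and strict inequality unless already symmetric, is the content of Proposition 2.6's framework in \cite{ba1}. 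Once $\Omega=\Omega^*$ is symmetric, uniqueness of the solution to the Dirichlet problem $-\Delta u=a\delta_C$, $u=0$ on $\partial\Omega$, together with the symmetry of the data, gives $u(x,-y)=u(x,y)$ immediately: the function $(x,y)\mapsto u(x,-y)$ solves the same boundary value problem on the same domain.

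The main obstacle I anticipate is making the equality-case argument rigorous: from $J(\Omega^t)=J(\Omega)$ for small $t$ one must genuinely conclude $\phi_1=-\phi_2$, which requires knowing that the energy is \emph{strictly} decreased by any nontrivial symmetrization step. This strict monotonicity is delicate precisely where the solution $u$ could fail to be, say, analytic or where level sets of $u$ could contain vertical segments; one typically needs that the set where $\nabla u$ is vertical (so that the co-area slicing degenerates) has measure zero, which here follows from $u$ being harmonic off $C$ (hence real-analytic there) and not constant. A secondary technical point is handling the source term $f=a\delta_C$, which is a measure rather than an $L^2$ function as in the general framework: since $C$ lies exactly on the symmetrization axis, $\int f\,u^t=a\int_{-1}^1 u^t(x,0)\,dx$ and one checks that recentering the slices through points $(x,0)$ can only increase $u^t(x,0)$ (the value at the slice center dominates, by a symmetric-decreasing-type argument for the harmonic part), so the measure-valued source is in fact the favorable case and introduces no real difficulty beyond careful bookkeeping.
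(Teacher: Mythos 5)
The first half of your plan coincides with the paper's proof: $y$-convexity from Lemma 2.2, admissibility of the continuously Steiner-symmetrized domains $\Omega^{t}$ for small $t$ via Proposition 2.6 (this is where the no-Type-II-arc hypothesis enters), the energy comparison giving $J(\Omega^{t})\leq J(\Omega)$, and equality by minimality. Your handling of the source term is even slightly more careful than the paper's, since you note that what is needed is that the recentering does not decrease $\int_{C}u^{t}$.

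The genuine gap is in your equality-case analysis. You assert that $J(\Omega^{t})=J(\Omega)$ for small $t$ forces the centers of the vertical slices not to move, i.e.\ $\phi_{1}=-\phi_{2}$, and you justify this by a claimed \emph{strict} decrease of the energy under any nontrivial symmetrization step, to be obtained from analyticity of $u$ off $C$ and the vertical-gradient set having measure zero. This is precisely the delicate point, and it is not a result you can invoke: the known theorem (Brock, cited as \cite{br} in the paper) says only that the vanishing of $\lim_{t\to 0}\frac{1}{t}\bigl(\int_{\Omega^{t}}|\nabla u^{t}|^{2}-\int_{\Omega}|\nabla u|^{2}\bigr)$ implies \emph{local} symmetry in the $Oy$ direction, in the sense of Definition 3.1 — a decomposition into pairs of components reflected about some axis $y=y_{0}$ plus a set where $\partial u/\partial y=0$. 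Local symmetry does not by itself give symmetry of $\Omega$ about $Ox$, nor does it pin the axis to $y_{0}=0$; your measure-zero remark does not exclude, say, symmetry about a horizontal line $y_{0}\neq 0$. The paper closes this gap with a second, nontrivial argument: setting $v(x)=u(\sigma(x))$ and $\tilde u=u-v$, using $\tilde u=0$ on a component $\omega_{1}$ together with unique continuation to get $u=v$ on $\Omega\setminus(C\cup\sigma(C))$, then using the explicit fundamental solution $w$ of $-\Delta w=a\delta_{C}$ to see that $u$ is continuous across $C$, hence $u=v$ on all of $\Omega$, and finally observing that if $y_{0}\neq 0$ then $v$ would be harmonic on the half-plane $y<y_{0}$ containing $\sigma(C)$, a contradiction with the singular source; this forces $y_{0}=0$, $\sigma(C)=C$, and yields both the symmetry of $\Omega$ and $u(x,-y)=u(x,y)$. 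Your proposal contains no substitute for this local-to-global step, so as written it does not prove the proposition.
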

\begin{proof}
Pour $t$ petit, soit $\Omega^{t}$ le symétrisé de $\Omega$ par la symétrisation de Steiner continue. Appelons $u^{t}$
la fonction symétrisée de $u$ et $u_{t}$ le potentiel associé à $\Omega^{t}$, c'est à dire la solution du problème de Dirichlet $P(a\delta_{C})$ sur $\Omega^{t}$.
En utilisant la formulation variationnelle de l'équation aux dérivées partielles, $u_{t}$ minimise (sur $H^{1}_{0}(\Omega{t})$)
la fonctionnelle $$\int_{\Omega^{t}}|\nabla v|^2-2a\int_{C}v.$$
Par conséquent, puisque $u^{t}\in H^{1}_{0}(\Omega^{t})$\cite{br},
$$\int_{\Omega^{t}}|\nabla u_{t}|^2-2a\int_{C}u_{t}\leq \int_{\Omega^{t}}|\nabla u^{t}|^2-2a\int_{C}u^{t}.$$

Par ailleurs, d'après les propriétés classiques de la symétrisation \cite{br}, on a $|\Omega^{t}|=|\Omega|$,
$$\int_{\Omega^{t}}|\nabla u^{t}|^2\leq\int_{\Omega}|\nabla u|^2, \text{et}$$
$$\int_{\Omega^{t}}u^{t}=\int_{\Omega}u.$$
Donc
$$\int_{\Omega^{t}}|\nabla u^{t}|^2-2a\int_{C}u^{t}\leq \int_{\Omega^{t}}|\nabla u|^2-2a\int_{C}u.$$
Par suite,
$$J(\Omega^{t})\leq J(\Omega).$$
Comme $\Omega$ est un minimum de $J$ sur $\mathcal{O}_{C}$ qui contient aussi $\Omega^{t}$ pour $t$ petit, alors $J(\Omega^{t})=J(\Omega)$, et
$$\int_{\Omega^{t}}|\nabla u^{t}|^2=\int_{\Omega}|\nabla u|^2.$$
Ainsi $u^{t}=u_{t}$. Pour conclure, nous utilisons le résultat suivant, prouvé par Brock dans son Habilitation \cite{br} : Si
$$\lim_{t\rightarrow 0}\frac{\int_{\Omega^{t}}|\nabla u^{t}|^2-\int_{\Omega}|\nabla u|^2}{t}=0$$
alors $\Omega$ est localement symétrique dans la direction $Oy$ au sens de la définition suivante :
\begin{definition}
Soient $\Omega$ un ouvert de $\mathbb{R}^2$ et $u\in H^{1}_{0}(\Omega)\cap C^{1}_{0}(\bar\Omega)$.
On dit que $\Omega$ est localement symétrique par rapport à l'axe $\{y=y_{0}\}$ ($y_{0}\in\mathbb{R}$) si on peut le décomposer sous la forme
$$\Omega=\bigcup_{k=1}^{k=m}(\omega_{1}^{k}\cup\omega_{2}^{k})\cup G$$
($m$ éventuellement infini), où
\begin{enumerate}
  \item $\omega_{1}^{k}$ est une composante connexe (maximale)de $\Omega\cap\{\frac{\partial u}{\partial y}>0\}$,
  \item $\omega_{2}^{k}$ est son symétrique par rapport à l'axe $\{y=y_{0}\}$
  \item pour tout $(x',y)\in\omega_{1}^{k}$, $u(x',y)=u(x',2y_{0}-y)<u(x',z)$ pour tout $z\in[y,2y_{0}-y]$
  \item $\frac{\partial u}{\partial y}=0$ sur $G$.
\end{enumerate}
\end{definition}
 Soit $\omega_{1}$ une composante connexe de $\Omega$ définie comme ci-dessus et $\omega_{2}$ son image
 par une symétrie $\sigma$ d'axe $y=y_{0}$. Soit $x\in\Omega$. Posons $v(x)=u(\sigma(x))$ et $\tilde{u}(x)=u(x)-v(x)$, alors
 \begin{enumerate}
   \item $\Delta\tilde{u}(x)=a(\delta_{\sigma(C)}-\delta_{C})$ dans $\Omega$.
   \item $\tilde{u}$ est harmonique sur $\Omega\setminus (C\cup\sigma(C))$.
   \item $\tilde{u}=0$ sur l'ouvert $\omega_{1}$ par définition de la locale symétrie.
 \end{enumerate}
Par conséquent, d'après le principe d'unique continuation, $\tilde{u}$ est identiquement nulle sur $\Omega\setminus (C\cup\sigma(C))$,
ou encore $u=v$ sur $\Omega\setminus (C\cup\sigma(C))$. Donc, $\Omega\setminus (C\cup\sigma(C))$ est symétrique par rapport à l'axe $\{y=y_{0}\}$.
Supposons, par l'absurde, que $\sigma(C)\neq C$. Nous allons montrer que $u=v$ sur $\sigma(C)$.\\
Notons $w$ une solution fondamentale de l'équation $-\Delta w=a\delta_{C}$ dans $\mathbb{R}^2$ :
\begin{eqnarray*}
  w(x,y) & = &  \frac{a}{4\pi}[(x+1)\ln((x+1)^2+y^2)-(x-1)\ln((x-1)^2+y^2)] \\\\
   & - & y\frac{a}{\pi}[\arctan(\frac{x-1}{y})-\arctan(\frac{x+1}{y})-4].
\end{eqnarray*}
qui est continue par rapport à $y$. Comme $u-w$ est harmonique dans $\Omega$ alors $u$, comme $w$, est continue au passage de $C$.
Donc $u$ est continue sur $\Omega$. Soit maintenant $y\in\sigma(C)$ et soit $y_{n}$ une suite de points de $\Omega\setminus\sigma(C)$ qui tend vers $y$. La contiuité de $u$ et de $v$ entraînent qu'à la limite $u(y)=v(y)$, d'où $u=v$ sur $\sigma(C)$ (et de la même manière $u=v$ sur $C$). Par suite, $u=v$ sur tout $\Omega$. Mais $u$ est harmonique sur $\Omega\setminus\sigma(C)$ donc, en particulier, sur le demi-plan $y<y_{0}$. Donc $v$ est aussi harmonique sur $y<y_{0}$, ce qui est absurde si $y_{0}\neq 0$. On conclut donc que $y_{0}=0$, $\sigma(C)=C$ et $\Omega$ est symétrique par rapport à l'axe $\{y=0\} $.
\end{proof}

\section{Remarques finales}
\begin{remark}
On donne ici les grandes lignes de la démonstration du résultat de Shahgholian \cite{sh}. Soit $\Omega$ un ouvert (de classe $C^{2}$) solution du problème $QS(f,k)$ et soit $u\in C^{2}(\bar{\Omega})$ la solution du problème surdéterminé sur $\Omega$. Supposons qu'il existe un point $x\in\partial\Omega\setminus C$ dont la normale intérieure de vecteur directeur $\nu(x)$ ne rencontre pas le convexe $C$. On peut trouver alors, un hyperplan $T$ contenant $\nu(x)$ tel que $T\cap C=\emptyset$. On appelle $\eta$ le vecteur unitaire orthogonal à $T$ et dirigé dans le sens opposé à $C$. Dans ce cas, pour un certain $t_{0}$,
$$T=\{y\in\mathbb{R}^{N},\; \eta.y=t_{0}\}.$$
Posons alors $T_{t}\{y\in\mathbb{R}^{N};\; \eta.y=t\}$ de sorte que $T$ coincide avec $T_{t_{0}}$. Soit
$$\Omega_{t_{0}}=\{y\in\Omega,\; \eta.y>t_{0}\},$$
et soit $Ref(\Omega_{t_{0}})$ le symétrique de $\Omega_{t_{0}}$ par rapport à $T_{t_{0}}$. Celui-ci étant orthogonal à $\partial\Omega$ au point $x$ puisqu'il contient la normale intérieure à $\partial\Omega$ en $x$. Sans perdre de généralité, on prut toujours supposer que $t_{0}$ est le plus grand des réels $t$ pour lequel cette situation apparaît (puisque sinon, on se décalerait dans le sens du vecteur $\eta$ jusqu'à rencontrer un autre hyperplan $T_{t}$ orthogonal à $\partial\Omega$). Commençons par montrer le
\begin{lemma}
Le symétrique $Ref(\Omega_{t_{0}})$ est contenu dans $\Omega$.
\end{lemma}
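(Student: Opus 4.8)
The plan is to prove the lemma by the method of moving hyperplanes \cite{gwn}, applied to the pencil $(T_t)_t$ of hyperplanes parallel to $T$, started far out and pushed back toward $T_{t_0}$. Write $\sigma_t$ for the reflection across $T_t$, so that $Ref(\Omega_{t_0})=\sigma_{t_0}(\Omega_{t_0})$, put $\lambda_0=\sup_{y\in\overline\Omega}\eta\cdot y$ (finite since $\Omega$ is bounded), and for $t<\lambda_0$ set $\Omega_t=\{y\in\Omega:\ \eta\cdot y>t\}$, so $\Omega_{t_0}$ is the cap to be reflected (if it is empty there is nothing to prove, so assume $\lambda_0>t_0$). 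Since $\eta$ points away from $C$ and $T_{t_0}\cap C=\emptyset$, the compact set $C\supset\mathrm{supp}\,f$ lies in $\{\eta\cdot y<t_0\}$; in particular $f\equiv 0$ on every cap $\Omega_t$ with $t\ge t_0$ — this fact drives the maximum principle below. Let
\[
A=\{\,t\in(t_0,\lambda_0)\ :\ \sigma_s(\Omega_s)\subset\Omega\ \text{ for every }s\in[t,\lambda_0)\,\}.
\]
It suffices to show $\inf A\le t_0$: then $\sigma_t(\Omega_t)\subset\Omega$ for all $t>t_0$, and letting $t\downarrow t_0$ gives $\sigma_{t_0}(\Omega_{t_0})\subset\overline\Omega$; since this set is open and $\Omega$ is a $C^2$ domain, it lies in $\Omega$, which is the claim.

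First, $A\neq\emptyset$: for $t$ close to $\lambda_0$ the cap $\Omega_t$ is a thin collar of the compact set $M=\{y\in\partial\Omega:\eta\cdot y=\lambda_0\}$, at each point of which $\eta$ is the interior normal, and a local graph computation (the classical starting step of the moving-plane method, using $\partial\Omega\in C^2$) gives $\sigma_t(\Omega_t)\subset\Omega$ for all such $t$. Hence $\overline t:=\inf A<\lambda_0$. Suppose, for contradiction, $\overline t>t_0$. By continuity $\sigma_{\overline t}(\Omega_{\overline t})\subset\overline\Omega$, and by the usual alternative at the stopping value at least one of the following holds: (a) $\sigma_{\overline t}(\Omega_{\overline t})$ is internally tangent to $\partial\Omega$ at some $q\in\partial\Omega$ with $q\notin T_{\overline t}$; or (b) $T_{\overline t}\perp\partial\Omega$ at some point of $\partial\Omega\cap T_{\overline t}$ (otherwise $\sigma_t(\Omega_t)\subset\Omega$ would persist for $t$ slightly below $\overline t$, contradicting $\overline t=\inf A$). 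Alternative (b) is excluded at once: as $\overline t>t_0$ we have $T_{\overline t}\cap C=\emptyset$, so such a point lies in $\partial\Omega\setminus C$ and has interior normal contained in $T_{\overline t}$, hence disjoint from $C$ — exactly the situation of the construction, now realized at the parameter $\overline t>t_0$, contradicting the maximality of $t_0$.

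It remains to rule out (a). On $\Omega_{\overline t}$ set $w=u\circ\sigma_{\overline t}-u$; it is $C^2$ on $\overline{\Omega_{\overline t}}$ since $\sigma_{\overline t}(\Omega_{\overline t})\subset\overline\Omega$, $u\in C^2(\overline\Omega)$, and both $p=\sigma_{\overline t}(q)$ and $q$ avoid $C$. Because $f\ge 0$ and $f\equiv 0$ on $\Omega_{\overline t}$,
\[
-\Delta w=f\circ\sigma_{\overline t}-f=f\circ\sigma_{\overline t}\ \ge\ 0\qquad\text{in }\Omega_{\overline t},
\]
so $w$ is superharmonic. On $\Omega\cap T_{\overline t}$ we have $\sigma_{\overline t}=\mathrm{id}$, hence $w=0$; on $\partial\Omega\cap\{\eta\cdot y>\overline t\}$ we have $u=0$ and $u\circ\sigma_{\overline t}\ge 0$, hence $w\ge 0$; so $w\ge 0$ on $\partial\Omega_{\overline t}$ and therefore $w\ge 0$ in $\Omega_{\overline t}$. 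Now $p\in\partial\Omega\cap\{\eta\cdot y>\overline t\}$ is a $C^2$ point of $\partial\Omega_{\overline t}$ off $T_{\overline t}$, and $w(p)=u(q)-u(p)=0$, so $w$ attains its minimum at $p$. Writing $\nabla u=k\,\nu$ on $\partial\Omega$ ($\nu$ the interior unit normal; this uses $u=0$, $|\nabla u|=k$, $k>0$, $u>0$ in $\Omega$) and letting $\rho$ be the symmetric involutive linear part of $\sigma_{\overline t}$,
\[
\nabla w(p)=\rho\,\nabla u(q)-\nabla u(p)=k\,(\rho\,\nu(q)-\nu(p)).
\]
Internal tangency of $\sigma_{\overline t}(\Omega_{\overline t})\subset\Omega$ at $q$ forces the interior normals to agree, i.e. $\rho\,\nu(p)=\nu(q)$, so $\nu(p)=\rho\,\nu(q)$ and $\nabla w(p)=0$, whence $\partial w/\partial\nu(p)=0$. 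This contradicts Hopf's lemma for the superharmonic nonnegative $w$ vanishing at the $C^2$ boundary point $p$ — unless $w\equiv 0$ on $\Omega_{\overline t}$. But $w\equiv 0$ means $u\circ\sigma_{\overline t}=u$ there, and since inside $\overline\Omega$ the zero set of $u$ is exactly $\partial\Omega$, this makes $\Omega$ symmetric about $T_{\overline t}$; at a $C^2$ point of $\partial\Omega\cap T_{\overline t}$ the interior normal is then either $\eta$ (forcing $\overline t\in\{\lambda_0,\inf_{\overline\Omega}\eta\cdot y\}$, impossible as $t_0<\overline t<\lambda_0$) or contained in $T_{\overline t}$, i.e. $T_{\overline t}\perp\partial\Omega$, again contradicting the maximality of $t_0$. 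In every case we reach a contradiction, so $\inf A\le t_0$ and the lemma follows.

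The real obstacle is alternative (a): it is the Serrin / Gidas--Ni--Nirenberg interior-contact argument, whose heart is combining Hopf's lemma for the superharmonic $w$ with the exact identity $\nabla w(p)=0$, which in turn rests on the overdetermined data $u=0$, $|\nabla u|=k$ together with the equality of interior normals at an internal tangency. One must also not overlook the degenerate sub-case $w\equiv 0$: here the argument closes only by converting the resulting reflection symmetry of $\Omega$ into a contradiction with the maximality of $t_0$ — the point where that hypothesis is genuinely needed. Everything else (boundedness giving $\lambda_0<\infty$, the thin-cap starting step, the openness/closedness bookkeeping for $A$, and the sign and support of $f$ that make $w$ superharmonic) is routine but should be stated carefully.
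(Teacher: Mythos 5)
Your proof is correct and rests on the same mechanism as the paper's: reflection across the critical hyperplane and Hopf's lemma at an interior tangency point, in the spirit of Serrin and Gidas--Ni--Nirenberg. The organization, however, is genuinely different. The paper argues directly by contradiction: assuming $Ref(\Omega_{t_0})\not\subset\Omega$ it sets $s=\sup\{t:\ Ref(\Omega_t)\cap\Omega^{c}\neq\emptyset\}$, asserts that $\partial Ref(\Omega_{s})$ is internally tangent to $\partial\Omega$ at a point $x^{0}\notin T_{s}$, and applies Hopf to $v=u-\tilde u$ on the reflected cap; the two delicate alternatives -- contact occurring only on $T_{s}$ (the orthogonal ``corner'' case) and the degenerate case $v\equiv 0$ -- are simply asserted away (``$x^{0}$ n'appartenant pas \`a $T_{s}$'', ``ce qui n'est pas possible''), and the maximality of $t_{0}$ is never invoked inside the lemma. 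You instead run the full moving-plane sweep (the set $A$, the thin-cap starting step, the stopping-value dichotomy), work with $w=u\circ\sigma_{\overline t}-u$ on the cap itself rather than on its reflection, and -- this is the useful twist -- use the maximality of $t_{0}$ to dispose of both the orthogonal-contact alternative and the degenerate case $w\equiv 0$, so that no Serrin-type corner lemma is needed at this stage; your version is thus more complete exactly where the paper is elliptic. The one step you state too quickly is the passage from $w\equiv 0$ on $\Omega_{\overline t}$ to the symmetry of $\Omega$ about $T_{\overline t}$: besides $u>0$ in $\Omega$ (strong maximum principle, $\Omega$ connected) you need that the reflected outer boundary lands in $\partial\Omega$ and then a connectedness argument (the open symmetric set $\Omega_{\overline t}\cup\sigma_{\overline t}(\Omega_{\overline t})\cup(\Omega\cap T_{\overline t})$ has its boundary contained in $\partial\Omega$, hence equals $\Omega$); this is standard and no worse than the paper's own level of detail, but it deserves a line.
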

\begin{proof}
Supposons, par l'absurde, que $Ref(\Omega_{t_{0}})$ n'est pas inclus dans $\Omega$. Soit
$$s=\sup\{t,\; Ref(\Omega_{t})\cap\Omega^{c}\neq\emptyset\}.$$
Alors $Ref(\Omega_{s})$ est dans $\Omega$ et son bord est intérieurement tangent à $\partial\Omega$ en un point $x^{0}$ n'appartenant pas à $T_{s}$. Définissons $\tilde{u}$ telle que $\tilde{u}(y)=u(y')$ ($y'$ étant le symétrique de $y$ par rapport à $T_{s}$). Si $v=u-\tilde{u}$, alors $v$ est surharmonique dans $Ref(\Omega_{s})$ et elle est positive sur $\partial Ref(\Omega_{s})$. Par conséquent, ou bien $v=0$ (ce qui n'est pas possible), ou bien $v>0$ sur $Ref(\Omega_{s})$ et atteint son maximum en un point du bord o\`{u} elle est nulle, en particulier en $x^{0}$. Mais alors, par le principe du maximum de Hopf
$$0>\frac{\partial v}{\partial\nu}(x^{0})=\frac{\partial u}{\partial\nu}(x^{0})-\frac{\partial\tilde{u}}{\partial\nu}(x^{0})=0.$$
Aboutissant ainsi à une contradiction.
\end{proof}
On montre alors, comme le fait Serrin \cite{se}, que $x$ est un zéro double de la fonction $v=u-\tilde{u}$ (cette fois-ci $\tilde{u}(y)=u(y')$ et $y'$ est le symétrique de $y$ par rapport à $T_{t_{0}}$). On termine la démonstration en utilisant le lemme classique de Serrin qui généralise le principe du maximum de Hopf à une situation comme celle-ci : si $v$ est de classe $C^{2}$ dans $\bar\Omega_{t_{0}}$ et
\begin{eqnarray*}
\Delta v& \leq & 0 \quad \text{dans  }\Omega_{t_{0}},\\
v& \geq & 0\text{ dans  }\Omega_{t_{0}}, et\\
 v(x)& = &0.
\end{eqnarray*}

Alors, pour toute direction $\delta$ entrant non tangentiellement dans $\Omega_{t_{0}}$, on a
\begin{itemize}
  \item[$\bullet$] ou bien $\frac{\partial v}{\partial\delta}(x)>0$
  \item[$\bullet$] ou bien $\frac{\partial^{2} v}{\partial\delta^{2}}(x)>0$ (sauf si $v\equiv 0$).
\end{itemize}

\end{remark}

\begin{remark}
On peut utiliser les résultats obtenus par la symétrisation de Steiner continue, pour retrouver la condition surdéterminée pour les arcs de Type I.
En effet, puisqu'on a deux arcs de cercle qui se font face, tous les deux de Type I et que $u$ est symétrique, alors la condition d'optimalité s'écrit :
$$\int_{\theta}^{\theta+\alpha}(k^2-|\nabla u(s)|^2)\phi(s)ds\geq 0,$$
pour toute fonction $\phi$ décroissante, et
$$\int_{-\theta}^{-\theta-\alpha}(k^2-|\nabla u(s)|^2)\phi(s)ds\geq 0$$
pour toute fonction $\phi$ croissante. Ce qui est équivalent, en utilisant la symétrie de $u$, à
$$\int_{-\theta}^{-\theta-\alpha}(k^2-|\nabla u(s)|^2)\phi(s)ds=0,$$
pour toute fonction $\phi$ croissante.
Soit maintenant $\phi$ une fonction de classe $C^1$, elle s'écrit trivialement $\phi=\Phi_{1}-\phi_{2}$ où $\phi_{1}$ et $\phi_{2}$
sont toutes les deux croissantes. Donc l'égalité ci-dessus est vraie pour toute fonction $\phi$ de classe $C^1$ et par conséquent $|\nabla u|=k$
presque partout sur tout arc de Type I.
\end{remark}
\begin{remark}
Dans leur article \cite{bsl}, en utilisant le principe du maximum combiné avec la condition de compatibilité du problème de Neumann, les auteurs donnent une condition suffisante d'existence de solution pour le problème $QS(f,k)$ : If $|\nabla u_{C}|>k$ alors $C\subset \Omega$, ($C$ étant l'enveloppe convexe de $K$). Dans le cas où la fonction $f$ est à symétrie radiale avec $C=B_{R}$, les auteurs obtiennent la condition nécessaire et suffisante suivante : $$\int_{0}^{R}s^{N-1}f(s)ds>kR^{N-1}.$$
Par ailleurs, en utilisant essentiellement le principe du maximum \cite{ba2}, l'auteur montre que le problème $QS(f,k)$ admet une solution si et seulement si $$\int_{C}f(x)dx>k|\partial C|.$$\\
\end{remark}
\begin{remark}
Dans leur article \cite{gs}, les auteurs montrent l'existence d'un minimum pour la fonctionnelle
$$J(v)=\int_{\mathbb{R}^N}(|\nabla v|^2-2fv+k^2\chi_{v>0})dx,$$
pour tout $0\leq v\in H^1(\mathbb{R}^N)$. Ils démontrent que $(\Omega_{u},u)$ est une solution du problème $QS(f,k)$ mais seulement dans un sens faible. La condition surdéterminée est donnée par :
$$\lim_{\epsilon\rightarrow 0}\int_{u>\epsilon}(|\nabla u|^2-k^2).\eta d\mathcal{H}^{N-1}=0,$$
pour tout $\eta\in C^{\infty}_{0}(\mathbb{R}^N ; \mathbb{R}^N)$.
Ils concluent leur article par donner la condition suffisante suivante : pour un $R>0$ donné, si $K\subset B_{R}$ et $\int_{B_{R}}f(x)dx>(\frac{6^NN}{3R}|B_{R}|)k$ alors $K\subset B_{3R}\subset \Omega_{u}$.\quad
Lorsque $f=a\delta_{C}$, les auteurs donnent $a>24\pi k$ comme condition suffisante d'existence de solutions pour le problème considéré.
\end{remark}

\end{document}